\documentclass[12pt]{amsart}


\usepackage[all]{xypic}
\usepackage{tikz}
\usetikzlibrary{arrows} 
\usetikzlibrary{decorations.markings}
\usepackage{graphicx}
\usepackage{bm}
\usepackage{epsf}
\usepackage{verbatim} 
\usepackage{colonequals} 
\usepackage{amsmath}
\usepackage{amsfonts}
\usepackage{amssymb}
\usepackage{mathrsfs}
\usepackage{amsthm}
\usepackage{newlfont}



\addtolength{\oddsidemargin}{-.8in}
\addtolength{\evensidemargin}{-.8in}
\addtolength{\textwidth}{1.6in}
\addtolength{\topmargin}{-.6in}
\addtolength{\textheight}{.5in}


\newtheorem{thm}{Theorem}
\newtheorem{prop}[thm]{Proposition}
\newtheorem{lem}[thm]{Lemma}
\newtheorem{cor}[thm]{Corollary}

\theoremstyle{definition}
\newtheorem{defn}[thm]{Definition}

\newtheorem{example}[thm]{Example}

\theoremstyle{remark}
\newtheorem{rem}[thm]{Remark}


\numberwithin{equation}{section}
\numberwithin{thm}{section}


\DeclareMathOperator{\rank}{rank}

\DeclareMathOperator{\Spec}{Spec}

\DeclareMathOperator{\Mat}{Mat}

\DeclareMathOperator{\Gal}{Gal}
\DeclareMathOperator{\GL}{GL}
\DeclareMathOperator{\PGL}{PGL}
\DeclareMathOperator{\SL}{SL}
\DeclareMathOperator{\PSL}{PSL}

\DeclareMathOperator{\Stab}{Stab}

\DeclareMathOperator{\Nr}{Nr}

\newcommand{\tor}{\mathrm{tor}}

\newcommand{\dd}{\mathrm{d}}

\newcommand{\me}{\mathrm{e}} 


\newcommand{\fn}{\mathfrak{n}}

\newcommand{\fp}{\mathfrak{p}}


\newcommand{\cC}{\mathcal{C}}

\newcommand{\cF}{\mathcal{F}}

\newcommand{\cI}{\mathcal{I}}

\newcommand{\cK}{\mathcal{K}}

\newcommand{\cO}{\mathcal{O}}

\newcommand{\cT}{\mathcal{T}}


\newcommand{\sT}{\mathscr{T}}


\newcommand{\A}{\mathbb{A}}

\newcommand{\C}{\mathbb{C}}

\newcommand{\F}{\mathbb{F}}

\newcommand{\p}{\mathbb{P}}

\newcommand{\T}{\mathbb{T}}

\newcommand{\Z}{\mathbb{Z}}



\newcommand{\eps}{\varepsilon}
\newcommand{\G}{\Gamma}
\newcommand{\To}{\longrightarrow}
\newcommand{\bs}{\setminus}
\newcommand{\Fi}{F_\infty}
\newcommand{\Ci}{\C_\infty}

\newcommand{\oG}{\overline{\Gamma}}

\newcommand{\La}{\Lambda}
\newcommand{\la}{\lambda}

\newcommand{\twist}[1]{#1\!\left\{\tau\right\}}

\newcommand{\atwist}[2]{#1\langle#2\rangle}

\newcommand{\ls}[2]{#1\!\left(\mkern-4mu\left(#2\right)\mkern-4mu\right)}

\newcommand{\Mod}[1]{\ (\mathrm{mod}\ #1)}
\newcommand{\abs}[1]{\left|#1\right|}

\newcommand{\longhookrightarrow}{\lhook\joinrel\longrightarrow}


\begin{document}
\title{On Drinfeld modular curves for SL(2)}
\author{Jesse Franklin}
\address{Department of Mathematics, Salt Lake Community College, Salt Lake City, United States of America}
\email{jfranklin1185@gmail.com}

\author{Sheng-Yang Kevin Ho}
\address{Department of Mathematics, Pennsylvania State University, University Park, Pennsylvania, United States of America}
\email{kevinho@psu.edu}

\author{Mihran Papikian}
\address{Department of Mathematics, Pennsylvania State University, University Park, Pennsylvania, United States of America}
\email{papikian@psu.edu}

\thanks{The third author was supported in part by a Collaboration Grant for Mathematicians from the Simons Foundation, Award No. 637364.}

\subjclass[2010]{11F06, 11G09, 14G35}
\keywords{Arithmetic groups over function fields; Drinfeld modular curves; Elliptic curves}

\begin{abstract}
	We study the Drinfeld modular curves arising from the Hecke congruence subgroups of $\SL_2(\F_q[T])$. Using a combinatorial 
	method of Gekeler and Nonnengardt, we obtain a genus formula 
	for these curves. In cases when the genus is one, we compute the Weierstrass equation of the corresponding curve. 
\end{abstract}

\maketitle


\section{Introduction} 

Let $\F_q$ be the finite field with $q$ elements and let $A=\F_q[T]$ be the 
ring of polynomials in indeterminate $T$ with coefficients in $\F_q$. Let $F=\F_q(T)$ be the fraction field of $A$. 
The degree function $\deg=\deg_T\colon A\to \Z_{\geq 0}\cup \{-\infty\}$, which assigns to $0\neq a\in A$ its degree 
as a polynomial in $T$ and $\deg_T(0)=-\infty$, extends to $F$ by $\deg(a/b)=\deg(a)-\deg(b)$. 
The map $-\deg$ is a valuation on $F$; the corresponding place of $F$ is usually denoted by $\infty$. 
Let $\abs{\cdot}$ denote the corresponding absolute value on $F$ normalized by $\abs{T}=q$. 
The completion $\Fi$ of $F$ with respect to this absolute value is isomorphic to field $\ls{\F_q}{1/T}$ 
of Laurent series in $1/T$. Finally, let $\Ci$ be the completion of an algebraic closure of $\Fi$. The absolute value 
$\abs{\cdot}$ has a unique extension, also denoted by $\abs{\cdot}$, to $\Ci$. 

Through the well-known relation between $A$-lattices of rank $2$ in $\Ci$ and Drinfeld $A$-modules of rank $2$ over $\Ci$, cf. \cite{Drinfeld}, 
the group $\GL_2(A)$ comes to play a role in the theory of Drinfeld modules and Drinfeld modular forms similar 
to the group $\SL_2(\Z)$ in the theory of elliptic curves over $\C$ and classical modular forms. 
In particular, the orbits of the action of $\GL_2(A)$ on the Drinfeld upper half-plane $\Omega\colonequals \Ci-\Fi$
 are in bijection with the isomorphism classes of rank $2$ Drinfeld modules over $\Ci$. 
 
 By considering congrunce subgroups of $\GL_2(A)$, one obtains Drinfeld modular curves of higher genus. 
 These modular curves possess rich arithmetic theory. An important example of a congruence group  is 
 the Hecke congruence group $\G_0(\fn)$ of level $\fn\in A$ defined by 
 $$
 \G_0(\fn)=\left\{ \begin{pmatrix} a & b \\ c & d\end{pmatrix}\in \GL_2(A)\ :\  \fn\mid c \right\}. 
 $$  
  The quotient $Y_0(\fn)\colonequals \G_0(\fn)\bs \Omega$ is an affine 
 curve with a canonical model over $F$; let $X_0(\fn)$ be the projective completion of $Y_0(\fn)$. The analogue of the modularity 
 theorem is known in this context by the results of Drinfeld and Deligne (cf. \cite{GR}): If $E$ is an elliptic curve over $F$ 
 with split multiplicative reduction at $\infty$ and conductor $\fn\cdot \infty$, then there is a non-constant morphism $X_0(\fn)\to E$. 
 
 In this article, we are interested in the congruence groups 
 $$
 \G_0^1(\fn)\colonequals \G_0(\fn)\cap \SL_2(A)
 $$
 and the corresponding Drinfeld modular curves $X_0^1(\fn)$. One reason why congruence subgroups of $\SL_2(A)$ are natural and interesting 
 is given in Lemma \ref{lem2.2}, where it is shown that the direct sum of modular forms on $\G_0(\fn)$ of fixed weight but varying type is isomorphic 
 to the space of modular forms on $ \G_0^1(\fn)$. The group $\SL_2(A)$ is also studied in Serre's book \cite[$\S$II.2]{Serre-Trees} 
 and in Gerritzen and van der Put's book \cite[Ch. X]{GvdP}. We also note that $X_0^1(\fn)=X_0(\fn)$ when $q$ is even, and there 
 is a natural morphism $X_0^1(\fn)\to X_0(\fn)$ of degree $2$ when $q$ is odd. 
 
 \vspace{0.1in}
 
 The main results of this article are the following:
 
 \begin{itemize}
 	\item In Section \ref{sGF}, we compute the genus of $X_0^1(\fn)$ following the combinatorial algorithm of Gekeler and Nonnengardt \cite{GN}.  
 	The resulting formula depends on the degrees and multiplicities of prime factors in the prime decomposition of $\fn$. 
 	When $\fn$ is prime of odd degree, this formula appears in \cite[10.13.3]{GvdP}, where it is deduced by geometric methods. 
 	
 	\item Assume $q$ is odd. From the genus formula, we observe that $X_0^1(\fn)$ has genus $1$ if and only if $\deg(\fn)=2$ and $\fn$ is square-free; 
 	more explicitly, either $\fn=T(T+1)$ (up to an automorphism of $F$) or $\fn$ is irreducible of degree $2$. In these cases, in  Section \ref{sEquations}, we deduce the 
 	Weierstrass equation of $X_0^1(\fn)$. 
 	
 \end{itemize}

 The fact that $X_0^1(\fn)$ is an elliptic curve when $q$ is odd and $\fn$ is square-free of degree $2$ is somewhat remarkable. 
 Indeed, $X_0(\fn)$ is an elliptic curve only in two cases, namely when $q=2$ and $\fn$ is either $T^2(T+1)$ or $T^3$ (up to an automorphism of $F$); 
 cf. \cite[Cor. 2.20]{GN}. The corresponding Weierstrass equations in these cases are given in (9.7.2) and (9.7.3) of \cite{GR}.  
 In addition, among the other modular curves $X_1^*(\fn)$, $X_1(\fn)$, $X(\fn)$ considered in \cite{GN}, the only elliptic curve is $X_1(\fn)$ for 
 $q=3$ and $\fn=T(T+1)$. 
 
 \subsection*{Acknowledgements} 
 The authors thank John Voight for providing them with a \textsc{Magma} program for computing the equation of $X_0^1(T(T+1))$ for $q=3$. The 
 parametrizations in $\S$\ref{ssX01T(T+1)} were guessed from the outputs of that program. The third author also thanks 
 Gunther Cornelissen, Andreas Schweizer  and Douglas Ulmer 
 for helpful communications related to the topics of this paper. 
 
 
 \section{Preliminaries}\label{sPrelim}  
 
 In addition to the notation in the introduction, we will use the following notation and conventions. 
Each nonzero ideal $\fn\lhd A$ has a unique monic generator, which, by abuse of notation, we will also denote by $\fn$. 
It will always be clear from the context whether $\fn$ denotes an ideal or its monic generator. We call a nonzero prime 
ideal $\fp\lhd A$ a \textit{prime} of $A$; the primes of $A$ are in bijection with the set of monic irreducible polynomials 
of $A$ of positive degree. 


\subsection{Drinfeld modules}\label{ssDM} An \textit{$A$-lattice} $\La\subset \Ci$ of rank $r\geq 1$ is an $A$-submodule of the form 
$\La=A\omega_1+\cdots +A\omega_r$, where $\omega_1, \dots, \omega_r\in \Ci$ are linearly independent over $\Fi$. To such a 
lattice we associate its \textit{exponential function} 
$$
e_\La(x)=x\prod_{0\neq \la\in \La} \left(1-\frac{x}{\la}\right). 
$$
The function $e_\La(x)$ is everywhere convergent on $\Ci$ and gives a surjective $\F_q$-linear map $e_\La\colon \Ci\to \Ci$. Let 
$\atwist{\Ci}{x}=\{a_0x+a_1x^q+\cdots+a_n x^{q^n}\mid n\geq 0, a_0, \dots, a_n\in \Ci\}$ be the noncommutative ring 
of $\F_q$-linear polynomials with usual addition of polynomials but where multiplication is defined via the composition of polynomials. For each 
$a\in A$, there is $\phi_a^\La(x)\in \atwist{\Ci}{x}$ such that $\deg_x \phi_a^\La(x)=q^{r\deg_T(a)}$, $\frac{\dd}{\dd x}\phi_a^\La(x)=a$, and 
$$
e_\La(a x)= \phi_a^\La(e_\La(x)). 
$$
The map $\phi^\La\colon A\to \atwist{\Ci}{x}$, $a\mapsto \phi_a^\La(x)$, is an $\F_q$-algebra homomorphism called the \textit{Drinfeld module} 
of rank $r$ associated to $\La$. Conversely, if $\phi\colon A\to \atwist{\Ci}{x}$, $a\mapsto \phi_a(x)$, is an $\F_q$-algebra homomorphism defined by 
$\phi_T(x)=Tx+g_1x^q+\cdots +g_r x^{q^r}$ with $g_r\neq 0$, then there is a unique $A$-lattice $\La$ of rank $r$ such that $\phi=\phi^\La$.  
Via $\phi^\La$, $\Ci$ acquires a new $A$-module structure $a\circ z=\phi^\La_a(z)$ denoted ${^{\phi^\La}}\Ci$. Thus, there is an exact sequence of $A$-modules 
\begin{equation}\label{eqDMUnif}
0\To \La \To \Ci \overset{e_\La}{\To} {^{\phi^\La}}\Ci\To 0. 
\end{equation}
Two Drinfeld modules $\phi$ and $\psi$ are \textit{isomorphic} if there is $c\in \Ci^\times$ 
such that $\phi_a(cx)=c\psi_a(x)$ for all $a\in A$. One shows that $\phi^{\La_1}$ and $\phi^{\La_2}$ are isomorphic if and only if $\La_1=c\La_2$ for some 
$c\in \Ci^\times$, i.e., if and only if the corresponding lattices are homothetic. 
We will be primarily interested in Drinfeld modules of rank $2$. In this case, every lattice is homothetic to a lattice of the form $Az+A$ for some $z\in \Omega$. 
It is not hard to show that the isomorphism classes of Drinfeld modules of rank $2$ are in bijection with the orbits of $\GL_2(A)$  acting on $\Omega=\Ci-\Fi$ 
via linear fractional transformations 
$$\begin{pmatrix} a & b \\ c & d\end{pmatrix} z=\frac{az+b}{cz+d}. $$ 
Detailed proofs of the above statements can be found in \cite[Ch. 5]{PapikianGTM}.


\subsection{Modular curves}

 The center $Z( \GL_2(A))\cong \F_q^\times$ acts trivially on $\Omega$, so  
 the action of $\GL_2(A)$ on $\Omega$ factors through $\PGL_2(A)=\GL_2(A)/\F_q^\times$. Similarly, the action of 
 $\SL_2(A)$ factors through $\PSL_2(A)=\SL_2(A)/\{\pm 1\}$. Let $\oG_0(\fn)$ (resp. $\oG_0^1(\fn)$)  
 denote the image of $\G_0(\fn)$ in $\PGL_2(A)$ (resp. the image of $\G_0^1(\fn)$ in $\PSL_2(A)$). 
 There is a commutative diagram with exact rows
$$
\xymatrix{ 1 \ar[r] & \G_0^1(\fn) \ar[r] \ar[d]^-{\mod \pm 1} & \G_0(\fn) 
	\ar[d]^-{\mod\F_q^\times} \ar[r]^-{\det}
	& \F_q^\times \ar[d]^-{\mod (\F_q^\times)^2} \ar[r] & 1 \\
	1 \ar[r] & \oG_0^1(\fn)\ar[r] & \oG_0(\fn) \ar[r] & \F_q^\times/(\F_q^\times)^2\ar[r] & 1.}
$$
Hence
\begin{equation}\label{eqG:G^1}
[\oG_0(\fn): \oG_0^1(\fn)] =\begin{cases} 2 & \text{if $q$ is odd}; \\ 
1 & \text{if $q$ is even}\end{cases}
\end{equation}

The set $\Omega=\Ci-\Fi$ has a natural structure of a smooth rigid-analytic space; cf. \cite[$\S$1]{GR}. 
The quotients $Y_0(\fn)=\G_0(\fn)\bs \Omega$ and $Y_0^1(\fn)=\G_0^1(\fn)\bs \Omega$ are the underlying 
analytic spaces of smooth affine algebraic curves defined over $\Fi$; cf. \cite[$\S$2]{GR}.  Denote by $X_0(\fn)$ (resp. $X_0^1(\fn)$) 
the projective closure of $Y_0(\fn)$ (resp. $Y_0^1(\fn)$). From \eqref{eqG:G^1} it follows that $X_0(\fn)=X_0^1(\fn)$ if $q$ is even, 
and there is a natural morphism $X_0^1(\fn)\to X_0(\fn)$ of degree $2$ if $q$ is odd.  The set of points $X_0(\fn)-Y_0(\fn)$ are 
the \textit{cusps} of $X_0(\fn)$; the cusps of $X_0^1(\fn)$ are defined similarly. 

For $0\neq \fn\in A$ and a Drinfeld module $\phi$ of rank $2$ over $\Ci$, let $\phi[\fn]$ be the set of zeros of $\phi_{\fn}(x)$. This set  
$\phi[\fn]$ is an $A$-submodule of ${^\phi}\Ci$ and it follows from \eqref{eqDMUnif} that $\phi[\fn]\cong \La/\fn\La\cong (A/\fn)^2$; note that 
$\phi[\fn]$ depends only on the ideal generated by $\fn$. A cyclic $\fn$-submodule of $\phi$ is an $A$-submodule of $\phi[\fn]$ isomorphic to $A/\fn$. 
The curve $Y_0(\fn)$ classifies the isomorphism classes of pairs $(\phi, \cC_\fn)$, where $\phi$ is a Drinfeld module of rank $2$ and $\cC_\fn$ 
is a cyclic $\fn$-submodule. A moduli interpretation for $Y_0^1(\fn)$ can be deduced from \cite[Thm. 5.2]{Breuer-h}. If $q$ is odd, then 
$Y_0^1(\fn)$ parametrizes isomorphism classes of Drinfeld modules with cyclic $\fn$-submodules along with $(\F_q^\times)^2$-classes of 
$T$-torsion points on their determinant modules. We refer to \cite[$\S$5]{Breuer-h} for the definition of this latter concept. 

\begin{rem}
\label{r: Breuer-h}
	In \cite{Breuer-h}, Breuer 
	considers the group 
	$$\G_0^+(\fn)=\{\gamma\in \Gamma_0(\fn)\mid \det(\gamma)\in (\F_q^\times)^2\}
	$$ 
	and the quotient $Y_0^+(\fn)=\G_0^+(\fn)\bs \Omega$. It is easy to check that the image of $\G_0^+(\fn)$ in $\PGL_2(A)$ coincides with $\oG_0^1(\fn)$, so 
	$Y_0^+(\fn)=Y_0^1(\fn)$.
\end{rem}

\subsection{Modular forms}\label{ssDMF}  Our main references for this subsection are \cite{Goss-ModForms} and \cite{Gekeler-Coeff}. 
From now 
on, unless indicated otherwise, \textbf{we assume that $q$ is odd}.  

Let $\G$ be a congruence subgroup of $\GL_2(A)$, i.e., $\G(\fn)\subseteq \G\subseteq \GL_2(A)$ for some $\fn\neq 0$, where $\G(\fn)$ 
is the (normal) subgroup of $\GL_2(A)$ consisting of matrices congruent to the identity matrix modulo $\fn$. 
A \textit{Drinfeld modular form} for $\G$ of weight $k\in \Z_{\geq 0}$ and type $m\in \Z/(q-1)\Z$ is a holomorphic function 
$$
f\colon \Omega\To \Ci
$$
such that 
\begin{enumerate}
	\item[(i)] $f(\gamma z) =(\det \gamma)^{-m} (cz+d)^k f(z)$ for all $\gamma = \begin{pmatrix}
	    a & b \\ c & d
	\end{pmatrix}\in \G$, and 
	\item[(ii)] $f(z)$ is holomorphic at the cusps of $\G$. 
\end{enumerate}
Denote the space of such functions by $M_{k, m}(\G)$. 

We explain condition (ii). Because $\G$ is a congruence group, it contains the subgroup $U_b=\begin{pmatrix} 1 & bA \\ 0 & 1\end{pmatrix}$ for 
some nonzero $b\in A$. Condition (i) implies that $f(z+b)=f(z)$, which itself implies that $f(z)$ can be expanded as 
$$
f(z)=\sum_{n\in \Z} a_n (1/e_{bA}(z))^n, \qquad a_n\in \Ci, 
$$
assuming $\Im(z)\colonequals \inf_{\alpha\in \Fi}\abs{z-\alpha}\gg 0$.  We say that $f(z)$ is \textit{holomorphic at the cusp $\infty$} if in the above expansion $a_n=0$ for all $n<0$ (this vanishing of coefficients with negative indices does not depend on the choice of $b$). 
Next, for $g=\begin{pmatrix} a & b \\ c & d\end{pmatrix}\in \GL_2(A)$, put $f|_g (z) = (\det(g))^m (cz+d)^{-k}f(g z)$. This 
$f|_g$ satisfies (i) for $\gamma \in g^{-1}\G g$, which is again a congruence group. Condition (ii) means that $f|_g$ is holomorphic at $\infty$ for all $g\in \GL_2(A)$. 
(Note that $f|_g=f$ for $g\in \G$, so for this last condition to hold it suffices that $f|_g$ is holomorphic at $\infty$ for left coset representatives of $\G$ in $\GL_2(A)$.)

Now we specialize to $\G=\G_0(\fn)$. Since $\G_0(\fn)$ 
contains the group of scalar matrices $\begin{pmatrix} \alpha & 0 \\ 0 & \alpha\end{pmatrix}$, $\alpha\in \F_q^\times$, applying 
condition (i) to such matrices one concludes that if $M_{k, m}(\G_0(\fn))\neq 0$, then $k\equiv 2m\Mod{q-1}$. Hence, if  $M_{k, m}(\G_0(\fn))\neq 0$, then $k$ is necessarily even and $m=k/2$ or $m=k/2+(q-1)/2$ modulo $q-1$. Next, a simple calculation shows that the differential $\dd z$ on $\Omega$ satisfies 
$$
\dd(\gamma z) = \frac{\det(\gamma)}{(cz+d)^2}\dd z \quad \text{for all }\gamma\in \GL_2(\Fi). 
$$
Hence, if $f(z)\in M_{2k, k}(\G_0(\fn))$, then $f(z)(\dd z)^k$ can be identified with a $k$-fold differential form on $X_0(\fn)$. 
These differential forms are holomorphic up to a divisor that accounts for the ramification in the covering $\Omega\to Y_0(\fn)$, 
and the ramification at the cusps; cf. \cite[pp. 51-53]{GekelerLNM}. Thus, knowing the genus of $X_0(\fn)$ and the aformentioned divisor, one easily computes the dimension 
of $M_{2k, k}(\G_0(\fn))$ for arbitrary $k$ using the Riemann-Roch theorem. On the other hand, modular forms in $M_{2k, k+\frac{q-1}{2}}(\G_0(\fn))$ do 
not correspond to differential forms on $X_0(\fn)$, so the previous strategy does not quite work for computing the dimension of $M_{2k, k+\frac{q-1}{2}}(\G_0(\fn))$. 
To circumvent this problem, observe that the modular forms for $\G_0^1(\fn)$ do not have type, or rather the type is always $0$, so 
every modular form in $M_{2k, 0}(\G_0^1(\fn))$ corresponds to a $k$-fold differential form on $X_0^1(\fn)$. Now one can compute the 
dimension of $M_{2k, k+\frac{q-1}{2}}(\G_0(\fn))$ by applying the Riemann-Roch theorem to $X_0^1(\fn)$, thanks to the following lemma: 

\begin{lem}\label{lem2.2} For any $k\geq 0$, 
	$$
	M_{2k, 0}(\G_0^1(\fn)) = M_{2k, k}(\G_0(\fn)) \oplus M_{2k, k+\frac{q-1}{2}}(\G_0(\fn)). 
	$$
\end{lem}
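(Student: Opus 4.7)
The plan is to realize the quotient $\F_q^\times \cong \Gamma_0(\fn)/\Gamma_0^1(\fn)$ from the exact sequence preceding \eqref{eqG:G^1} as a finite group of operators on $M_{2k,0}(\Gamma_0^1(\fn))$, to decompose this space into character eigenspaces, and to identify each eigenspace with a space of modular forms on $\Gamma_0(\fn)$ of a fixed type.

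First I would introduce the weight-$2k$ type-zero slash operator
$$(f|\gamma)(z) = (cz+d)^{-2k} f(\gamma z), \qquad \gamma = \begin{pmatrix} a & b \\ c & d\end{pmatrix}\in \GL_2(\Fi);$$
a routine chain-rule check shows that this defines a right $\GL_2(\Fi)$-action (no determinant factor is required, since the factors cancel). For $f \in M_{2k,0}(\Gamma_0^1(\fn))$ and $\gamma \in \Gamma_0(\fn)$, normality of $\Gamma_0^1(\fn)$ in $\Gamma_0(\fn)$ yields $\gamma\delta\gamma^{-1} \in \Gamma_0^1(\fn)$ for every $\delta \in \Gamma_0^1(\fn)$, hence $(f|\gamma)|\delta = f|\gamma$; combined with the fact that $\Gamma_0(\fn)$ permutes the cusps of $\Gamma_0^1(\fn)$, this shows that $f|\gamma$ again lies in $M_{2k,0}(\Gamma_0^1(\fn))$, so the action descends to $\Gamma_0(\fn)/\Gamma_0^1(\fn) \cong \F_q^\times$.

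Because $q-1$ is coprime to $\chr(\Ci)$ and $\Ci$ contains every $(q-1)$-th root of unity, the resulting $\F_q^\times$-representation on $M_{2k,0}(\Gamma_0^1(\fn))$ splits as a direct sum of character eigenspaces. Every character of $\F_q^\times$ has the form $\alpha \mapsto \alpha^{-j}$ for a unique $j \in \Z/(q-1)\Z$, and the corresponding eigenspace is cut out by
$$(cz+d)^{-2k} f(\gamma z) = (\det\gamma)^{-j} f(z) \quad \text{for all } \gamma\in\Gamma_0(\fn),$$
which is precisely the transformation law of $M_{2k,j}(\Gamma_0(\fn))$; conversely any such form automatically lies in $M_{2k,0}(\Gamma_0^1(\fn))$ because $\det$ is trivial on $\Gamma_0^1(\fn)$. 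Thus the $j$-th eigenspace equals $M_{2k,j}(\Gamma_0(\fn))$.

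To conclude, I would invoke the parity observation recalled immediately before the lemma: evaluating condition (i) on the scalar matrix $\diag(\alpha,\alpha) \in \Gamma_0(\fn)$ forces $\alpha^{2k-2j} = 1$ for every $\alpha \in \F_q^\times$, so $M_{2k,j}(\Gamma_0(\fn)) = 0$ unless $2k \equiv 2j \Mod{q-1}$. For $q$ odd this leaves only $j \equiv k$ and $j \equiv k + (q-1)/2$ modulo $q-1$, and the two surviving eigenspaces assemble into the asserted direct sum. The main technical point is the well-definedness of the $\F_q^\times$-action, i.e., normality together with preservation of holomorphy at the cusps; once that is secured, the decomposition is a formal consequence of character theory on a finite cyclic group whose order is coprime to $\chr(\Ci)$.
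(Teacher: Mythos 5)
Your proposal is correct and follows essentially the same route as the paper's proof (which condenses the argument and cites Miyake, Lemma 4.3.1): use normality of $\G_0^1(\fn)$ in $\G_0(\fn)$ to get an action of $\G_0(\fn)/\G_0^1(\fn)\cong\F_q^\times$ on $M_{2k,0}(\G_0^1(\fn))$ via the slash operator, decompose into determinant-character eigenspaces, identify these with the spaces $M_{2k,m}(\G_0(\fn))$, and use the scalar-matrix parity constraint to see that only the types $m\equiv k$ and $m\equiv k+\tfrac{q-1}{2}$ survive. Your write-up simply makes explicit the details (well-definedness of the action, semisimplicity since $q-1$ is prime to the characteristic, and the cusp conditions) that the paper leaves implicit.
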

\begin{proof} The argument here is similar to the proof of Lemma 4.3.1 in \cite{Miyake}. 
	Since $\G_0^1(\fn)$ is normal in $\G_0(\fn)$, $\G_0(\fn)$ acts on $M_{2k, 0}(\G_0^1(\fn))$ by 
	$f\mapsto f|_\gamma$, $\gamma\in \G_0(\fn)$. This action induces a representation of $\G_0(\fn)/\G_0^1(\fn)$ 
	on $M_{2k, 0}(\G_0^1(\fn))$. Since $\G_0(\fn)/\G_0^1(\fn) \cong \F_q^\times$, all irreducible representations of $\G_0(\fn)/\G_0^1(\fn)$ are 
	induced by powers of the determinant. Therefore, we obtain 
	$$
	M_{2k, 0}(\G_0^1(\fn)) =\bigoplus_{m\in \Z/(q-1)\Z}  M_{2k, m}(\G_0(\fn))= M_{2k, k}(\G_0(\fn)) \oplus M_{2k, k+\frac{q-1}{2}}(\G_0(\fn)). 
	$$
\end{proof}

Important examples of modular forms arise as ``coefficient forms". For $z\in \Omega$, we define the rank-$2$ lattice 
$
\La_z=A+Az\subset \Ci$. 
Denote the Drinfeld module of rank $2$ associated to $\La_z$ by $\phi^z$. It is determined by 
\begin{equation}\label{eqCoeffForms}
\phi^z_T=Tx+g(z)x^q+\Delta(z)x^{q^2}. 
\end{equation}
Then the functions $g(z)$ and $\Delta(z)$ are modular forms for $\GL_2(A)$ of type $0$ and of weights $q-1$ and $q^2-1$, respectively. 
Goss proved in \cite{Goss-ModForms} that $g(z)$ and $\Delta(z)$ are algebraically independent over $\Ci$, and 
$$
\bigoplus_{k\geq 1} M_{k, 0}(\GL_2(A))=\Ci[g, \Delta]. 
$$
It turns out that $\Delta(z)$ possesses a $(q-1)$-th root in the ring of modular forms for $\GL_2(A)$. More precisely, Gekeler proved in \cite{Gekeler-Coeff} that there is $h(z)\in M_{q+1, 1}$ such that $h(z)^{q-1}=\Delta(z)$, and 
$$
\bigoplus_{\substack{k\geq 1\\ m\in \Z/(q-1)\Z}} M_{k, m}(\GL_2(A))=\Ci[g, h]. 
$$

The \textit{$j$-function} is $j(z)\colonequals g(z)^{q+1}/\Delta(z)$. This function is holomorphic on $\Omega$ but has a pole at the cusp $\infty$. 
Since $j(\gamma z)=j(z)$ for all $\gamma\in \GL_2(A)$, it defines a rational function on $X_0(1)$. 
In fact, $j(z)$ generates the field of rational functions on $X_0(1)\cong \p^1_{\Ci}$. Put 
$$
\sqrt{j}(z)\colonequals g^{(q+1)/2}/h^{(q-1)/2}. 
$$
It is easy to check that 
$$
\sqrt{j}(\gamma z) = (\det(\gamma))^{(q-1)/2} \sqrt{j}(z)  \quad \text{for all }\gamma\in \GL_2(A). 
$$
Thus, $\sqrt{j}(z)$ is a modular function for $\SL_2(A)$. 

\begin{lem}\label{lemFFX0}
	Let $\cF_{0, \fn}$ and $\cF_{0, \fn}^1$ be the fields of rational functions on $X_0(\fn)$ and $X_0^1(\fn)$, respectively. 
	Then $\cF_{0, \fn}^1=\cF_{0, \fn}(\sqrt{j})$. 
\end{lem}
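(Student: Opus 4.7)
The plan is to show both inclusions of the extension, and then compare degrees. Let me first observe that $\sqrt{j}$ is, by definition, $g^{(q+1)/2}/h^{(q-1)/2}$, so its square is $g^{q+1}/h^{q-1} = g^{q+1}/\Delta = j$. Since $j$ is a rational function on $X_0(1) \cong \p^1_{\Ci}$, pulling back along the natural map $X_0(\fn) \to X_0(1)$ shows $j \in \cF_{0,\fn}$. Hence $\sqrt{j}$ generates an extension of $\cF_{0,\fn}$ of degree at most $2$.

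Next I would verify that $\sqrt{j}\in \cF_{0,\fn}^1$. By the transformation formula displayed just before the lemma, $\sqrt{j}(\gamma z)=(\det\gamma)^{(q-1)/2}\sqrt{j}(z)$ for every $\gamma\in\GL_2(A)$. Any $\gamma\in\G_0^1(\fn)\subset\SL_2(A)$ has $\det\gamma=1$, so $\sqrt{j}(\gamma z)=\sqrt{j}(z)$. Combined with holomorphy of $\sqrt{j}$ on $\Omega$ (as a ratio of holomorphic modular forms whose denominator $h^{(q-1)/2}$ is nonvanishing on $\Omega$, since $\Delta$ is) and its meromorphy at the cusps, this shows $\sqrt{j}$ descends to an element of $\cF_{0,\fn}^1$, giving $\cF_{0,\fn}(\sqrt{j})\subseteq\cF_{0,\fn}^1$.

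To finish, I need to show the extension $\cF_{0,\fn}(\sqrt j)/\cF_{0,\fn}$ is nontrivial, i.e.\ $\sqrt j \notin \cF_{0,\fn}$. For this, pick any non-square $\alpha\in\F_q^\times$ (which exists since $q$ is odd) and consider $\gamma=\begin{pmatrix}\alpha & 0 \\ 0 & 1\end{pmatrix}\in\G_0(\fn)$. Then $\det(\gamma)^{(q-1)/2}=\alpha^{(q-1)/2}=-1$, so $\sqrt{j}(\gamma z)=-\sqrt{j}(z)\neq\sqrt{j}(z)$, and hence $\sqrt j$ is not $\G_0(\fn)$-invariant. So $[\cF_{0,\fn}(\sqrt j):\cF_{0,\fn}]=2$. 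By \eqref{eqG:G^1}, the cover $X_0^1(\fn)\to X_0(\fn)$ has degree $2$ in the odd characteristic setting, so $[\cF_{0,\fn}^1:\cF_{0,\fn}]=2$ as well. The tower $\cF_{0,\fn}\subset\cF_{0,\fn}(\sqrt{j})\subseteq\cF_{0,\fn}^1$ with both extensions of degree $2$ forces equality.

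No step here is a serious obstacle; the only thing worth double-checking is that the matrix $\diag(\alpha,1)$ really represents a nontrivial identification in the quotient $Y_0(\fn)$ (it does, since only the scalar matrices act trivially on $\Omega$), which justifies reading the transformation formula as an identity of functions on the modular curve rather than just on $\Omega$.
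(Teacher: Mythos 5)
Your proof is correct and follows essentially the same route as the paper's: verify $\sqrt{j}$ is $\G_0^1(\fn)$-invariant hence lies in $\cF_{0,\fn}^1$, show it is not $\G_0(\fn)$-invariant (via a matrix of non-square determinant) so that $[\cF_{0,\fn}(\sqrt{j}):\cF_{0,\fn}]=2$, and conclude by comparing with the degree-$2$ covering $X_0^1(\fn)\to X_0(\fn)$. You merely spell out the details that the paper leaves implicit, which is fine.
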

\begin{proof}
	Since $\sqrt{j}$ is not a modular function for $\G_0(\fn)$ but its square is, we have $$[\cF_{0, \fn}(\sqrt{j}):\cF_{0, \fn}]=2.$$ 
	On the other hand, $[\cF_{0, \fn}^1:\cF_{0, \fn}]=2$ because there is a morphism $X_0^1(\fn)\to X_0(\fn)$ of degree $2$. 
	Since $\sqrt{j}\in \cF_{0, \fn}^1$, we conclude that $\cF_{0, \fn}^1=\cF_{0, \fn}(\sqrt{j})$. 
\end{proof}


\section{Genus formula for $X_0^1(\fn)$}\label{sGF} In this section we compute a genus formula for $X_0^1(\fn)$ following 
the combinatorial algorithm of Gekeler and Nonnengardt \cite{GN}. The algorithm itself actually computes the 
quotient of the Bruhat-Tits tree $\cT$ of $\PGL_2(\Fi)$ under the action of a congruence subgroup $\G\subseteq \GL_2(A)$. 
The fact that the genus of the projective compactification of $\G\bs \Omega$ is equal to the dimension of 
the homology group $H_1(\G\bs \cT, \C)$ is essentially due to Mumford \cite{MumfordCurves} (see also Reversat \cite{Reversat}). 

A genus formula for $X_0^1(\fn)$ can also be deduced from the known genus formula for $X_0(\fn)$ (see \cite[Thm. 2.17]{GN})
by analyzing the double cover $X_0^1(\fn)\to X_0(\fn)$ and applying the Riemann-Hurwitz formula. In this approach, one 
needs to compute the number of cusps and elliptic points that ramify in the covering. This calculation does not seem to be much easier 
than the combinatorial and group-theoretic calculations in this section. In addition, the graph $\G_0^1(\fn)\bs \cT$ contains 
other interesting arithmetic information about $X_0^1(\fn)$, such as the structure of 
the group of connected components of the N\'eron model of the Jacobian variety of $X_0^1(\fn)$ over $\Fi$, or 
the space of $\C$-valued automorphic forms for $\G_0^1(\fn)$. 

 
 \subsection{Preliminaries} Let $\cO_\infty=\F_q[\![1/T]\!]$ be the ring of integers of $\Fi=\ls{\F_q}{1/T}$. 
 The Bruhat-Tits tree of $\PGL_2(\Fi)$ is a $(q+1)$-regular tree whose vertices and edges 
 are given by 
 \begin{align*}
 	X(\cT) &=\GL_2(\Fi)/\GL_2(\cO_\infty) \cdot Z(\Fi) \\ 
 	Y(\cT) &=\GL_2(\Fi)/\cI \cdot Z(\Fi), 
 \end{align*}
 where $\cI$ is the Iwahori subgroup consisting of matrices $\begin{pmatrix} a& b\\ c & d\end{pmatrix}\in \GL_2(\cO_\infty)$ 
 with $c\in T^{-1}\cO_\infty$, and $Z$ is the center of $GL_2$. Equivalently, the vertices of $\cT$ are the homothety classes $[L]$ of rank-$2$ 
 $\cO_\infty$-lattices $L$ in $\Fi^2$, 
 with two vertices being adjacent if one can choose representative lattices $L\subset L'$ such that $L'/L\cong \F_q$; see \cite[p. 35]{GR}.

 For $i\in \Z$, let $v_i$ be the vertex represented by the matrix $\begin{pmatrix} T^i & 0 \\ 0 & 1\end{pmatrix}$. Denote 
\begin{align*}
	G_0 &=\SL_2(\F_q)\hookrightarrow \SL_2(A)\\ 
	G_i & = \left\{\begin{pmatrix} a & b \\ 0 & a^{-1} \end{pmatrix}\mid a\in \F_q^\times, \deg(b)\leq i \right\}, \quad i\geq 1.  
\end{align*}
For each $i\geq 0$, $G_i$ is the stabilizer of $v_i$ in $\SL_2(A)$ and $G_i\cap G_{i+1}$ is the stabilizer of the 
edge $e_i$ with origin $v_i$ and terminus $v_{i+1}$. Note that $G_i\cap G_{i+1}=G_i$ if $i\geq 1$.

By \cite[pp. 111-112]{Serre-Trees}, the subgraph formed 
of the $v_i$ and $e_i$ with $i\geq 0$ maps isomorphically onto the quotient graph $\SL_2(A)\bs \cT$, so this 
quotient graph is a half-line; see Figure \ref{Fig1}. 
\begin{figure}
	\begin{tikzpicture}[->, >=stealth', semithick, node distance=1.5cm, inner sep=.5mm, vertex/.style={circle, fill=black}]
		
		\node[vertex] (0) [label=below:$v_0$]{};
		\node[vertex] (1) [right of=0, label=below:$v_1$] {}; 
		\node[vertex] (2) [right of=1, label=below:$v_2$] {};
		\node[vertex] (3) [right of=2, label=below:$v_3$] {};
		\node[] (4) [right of=3] {};
		
		\path[]
		(0) edge  (1) (1) edge (2) (2) edge (3) (3) edge[dashed] (4);   
	\end{tikzpicture}
	\caption{$\SL_2(A)\bs \sT$}\label{Fig1}
\end{figure}

The Gekeler--Nonnengardt algorithm applied to our situation recovers the quotient $\G_0^1(\fn)\bs \cT$ by examining the covering 
$$
\pi\colon  \G_0^1(\fn)\bs \cT \To \SL_2(A)\bs \cT. 
$$
For $i\geq 0$, $v\in X(\G_0^1(\fn)\bs \cT)$, $e\in Y(\G_0^1(\fn)\bs \cT)$, put 
\begin{align*}
	X_i &\colonequals X_i(\G_0^1(\fn)\bs \cT)=\{v\in X(\G_0^1(\fn)\bs \cT)\mid \pi(v)=v_i\}\\ 
	Y_i &\colonequals Y_i(\G_0^1(\fn)\bs \cT)=\{e\in Y(\G_0^1(\fn)\bs \cT)\mid \pi(e)=e_i\}. 
\end{align*}
We have 
\begin{align*}
	X_i &\cong G_i\bs \SL_2(A)/\G_0^1(\fn)\\ 
	Y_i &\cong  (G_i\cap G_{i+1})\bs \SL_2(A)/\G_0^1(\fn), 
\end{align*}
and accordingly we call $X_i$ and $Y_i$ the vertices and edges of type $i$, respectively. 

\begin{lem} Let 
	$$
	\p^1(A/\fn)\colonequals \{(u:v)\mid u, v\in A/\fn, (A/\fn)u +(A/\fn)v=A/\fn\},
	$$
	where $(u: v)$ is the equivalence class of $(u, v)$ modulo $(A/\fn)^\times$. There is an isomorphism 
	\begin{align*}
		\SL_2(A)/\G_0^1(\fn) &\overset{\sim}{\To} \p^1(A/\fn)\\ 
		\begin{pmatrix} a & b \\ c & d\end{pmatrix} &\longmapsto (a:c)\mod \fn
		\end{align*}
		as $\SL_2(A)$-sets, where the action of $\SL_2(A)$ on $\p^1(A/\fn)$ is 
		$$
		\begin{pmatrix} a & b \\ c & d\end{pmatrix}  (u:v) =(au+bv : cu+dv). 
		$$
\end{lem}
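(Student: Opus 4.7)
The plan is to check that the map $\Phi\colon \begin{pmatrix}a & b \\ c & d\end{pmatrix}\mapsto (a:c)\bmod \fn$ is a well-defined bijection, and then verify $\SL_2(A)$-equivariance. First I would verify that the image lies in $\p^1(A/\fn)$: since $\det M = ad-bc = 1$, the ideal $(a,c)\subseteq A$ is the unit ideal, and hence $(a\bmod \fn, c\bmod \fn)$ generate $A/\fn$. Next, I would check well-definedness on right cosets $M\G_0^1(\fn)$. If $\gamma = \begin{pmatrix}\alpha & \beta \\ \fn\delta & \epsilon\end{pmatrix}\in \G_0^1(\fn)$, then $\alpha\epsilon \equiv 1\Mod{\fn}$, so $\alpha\in (A/\fn)^\times$, and the first column of $M\gamma$ is $(a\alpha+b\fn\delta, c\alpha+d\fn\delta)^T\equiv \alpha\cdot (a,c)^T\Mod{\fn}$, which represents the same class in $\p^1(A/\fn)$. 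Equivariance under the left $\SL_2(A)$-action is then immediate from the definition of the action on $\p^1(A/\fn)$.

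For injectivity, I would argue directly. Suppose $M_1,M_2\in \SL_2(A)$ with entries as in the statement satisfy $\Phi(M_1)=\Phi(M_2)$, i.e., there is $\alpha\in (A/\fn)^\times$ with $a_1\equiv \alpha a_2$ and $c_1\equiv \alpha c_2\Mod{\fn}$. A short calculation of $M_2^{-1}M_1$ shows its lower-left entry equals $-c_2a_1+a_2c_1$, which reduces to $-\alpha a_2 c_2+\alpha a_2 c_2 = 0\Mod{\fn}$. Hence $M_2^{-1}M_1\in \G_0^1(\fn)$, so $M_1$ and $M_2$ lie in the same coset.

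The main obstacle, as usual for this type of statement, is surjectivity: given $(u:v)\in \p^1(A/\fn)$, I must exhibit a matrix in $\SL_2(A)$ whose first column reduces to $(u,v)$. Choose arbitrary lifts $\tilde u, \tilde v\in A$ of $u,v$. Because $(u,v)$ generates $A/\fn$, we have $(\tilde u,\tilde v,\fn)=A$ as ideals of $A$. The key point is the following elementary lemma in a PID: there exists $t\in A$ with $\gcd(\tilde u,\tilde v + t\fn)=1$. I would prove this by the Chinese Remainder Theorem applied to the finitely many prime divisors $p$ of $\tilde u$: if $p\nmid \tilde v$ then any choice of $t\Mod{p}$ works; if $p\mid \tilde v$ then $p\nmid \fn$ by the coprimality of $(\tilde u,\tilde v,\fn)$, so we can choose $t\Mod{p}$ so that $\tilde v+t\fn\not\equiv 0\Mod{p}$. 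Combining these local choices yields a global $t$. Replacing $\tilde v$ by $\tilde v+t\fn$, we may assume $\gcd(\tilde u,\tilde v)=1$, and then B\'ezout produces $b,d\in A$ with $\tilde u d - b\tilde v = 1$; the matrix $\begin{pmatrix} \tilde u & b\\ \tilde v & d\end{pmatrix}\in \SL_2(A)$ maps to $(u:v)$ under $\Phi$.

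Putting these four steps together gives the asserted $\SL_2(A)$-equivariant bijection. The only non-formal ingredient is the coprimality lifting lemma, which is routine but deserves to be stated explicitly since it is where the PID structure of $A$ is genuinely used.
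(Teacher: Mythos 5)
Your proposal is correct in outline and in fact supplies more detail than the paper does, but it follows a somewhat different route. The paper proves surjectivity by completing the column $\begin{pmatrix} a\\ c\end{pmatrix}$ to a matrix in $\GL_2(A)$ and then rescaling the second column by an element of $\F_q^\times=A^\times$ to force determinant $1$; it then avoids a direct injectivity computation by observing that the preimage of the base point $(1:0)$ is exactly $\G_0^1(\fn)$, so that by transitivity every fibre is a single coset. You replace this orbit--stabilizer step by the explicit computation of the lower-left entry of $M_2^{-1}M_1$, and you replace the paper's one-line completion claim by an explicit lifting lemma (adjust the lift of $v$ by a multiple of $\fn$ so that the two lifts become coprime in $A$, then apply B\'ezout). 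The latter is a genuine service: as written, the paper's assertion that an \emph{arbitrary} lift $\begin{pmatrix} a\\ c\end{pmatrix}\in\Mat_{2,1}(A)$ can be completed to an element of $\GL_2(A)$ only holds after such an adjustment, since an arbitrary lift satisfies merely $\gcd(a,c,\fn)=1$; your CRT step is exactly the missing justification, and both proofs otherwise use the PID structure of $A$ in the same place.

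Two small points in your lifting lemma need repair, though both are cosmetic. First, in the case $p\nmid\tilde v$ it is not true that \emph{any} residue $t\Mod{p}$ works: if $p\nmid\fn$, the single class $t\equiv-\tilde v\,\fn^{-1}\Mod{p}$ is bad. But $t\equiv 0\Mod{p}$ always works in that case, so the CRT construction goes through. Second, the lemma as stated fails for the degenerate lift $\tilde u=0$, where $\gcd(\tilde u,\tilde v+t\fn)=\tilde v+t\fn$ would have to be a unit of $A$, which need not be achievable; your proof tacitly assumes $\tilde u\neq 0$ when it invokes ``the finitely many prime divisors of $\tilde u$.'' This is fixed by insisting on a nonzero lift of $u$ (for instance, take $\tilde u=\fn$ when $u=0$; then $v$ is a unit modulo $\fn$, every prime divisor of $\tilde u$ divides $\fn$ and hence does not divide $\tilde v$, so $t=0$ already works). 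With these adjustments your argument is complete.
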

\begin{proof}
	This map is surjective since given $(a:c)\in \p^1(A/\fn)$ and an arbitrary representative of it $\begin{pmatrix} a \\ c\end{pmatrix}\in \Mat_{2,1}(A)$, we can always choose 
	$\begin{pmatrix} b \\ d\end{pmatrix}\in \Mat_{2,1}(A)$ such that $\begin{pmatrix} a & b \\ c & d\end{pmatrix}\in \GL_2(A)$. Scaling $\begin{pmatrix} b \\ d\end{pmatrix}$  by an element of $\F_q^\times$, we can assume that $\det\begin{pmatrix} a & b \\ c & d\end{pmatrix}=1$. 
	
	Under the given map, the preimage of $(1:0)$ in $\SL_2(A)$ is $\G_0^1(\fn)$. Since $\SL_2(A)$ acts transitively on $\p^1(A/\fn)$, the preimage of every 
	point of $\p^1(A/\fn)$ is a coset of $\G_0^1(\fn)$. 
\end{proof}

Thus, one can compute $\G_0^1(\fn)\bs \cT$ in ``layers", where each layer is in bijection with the orbits of $G_i$ acting on $\p^1(A/\fn)$. 
Since $G_i$ acts on $\p^1(A/\fn)$ through its quotient modulo $\fn$, the orbits of $G_i$ acting on $\p^1(A/\fn)$ do not change once $i\geq d-1$, where $d\colonequals \deg(\fn)$. This implies that the subgraph of $\G_0^1(\fn)\bs \cT$ consisting of edges of type $\geq d-1$ is a disjoint union of half-lines (as in Figure \ref{Fig1}), 
called \textit{cusps} (see the appendix for some explicit examples). The number of cusps is the number of orbits of $G_{d-1}$ acting on $\p^1(A/\fn)$.

As is explained in \cite[(1.8)]{GN}, as far as the computation of the genus 
$$
g\left(\G_0^1(\fn)\bs \cT\right) \colonequals  \rank_\Z H_1\left(\G_0^1(\fn)\bs \cT, \Z\right )
$$
is concerned, only the 0-th layer and the cusps matter:
\begin{equation}\label{eqGenus}
	g\left(\G_0^1(\fn)\bs \cT\right)  = 1+\# Y_0 - \# X_0-\# X_{d-1}.
\end{equation}

\begin{rem}
	We caution the reader that we use the same notation $G_i$, $B=G_0\cap G_1$, $X_i$, $Y_i$, etc. as in \cite{GN}, but our objects  
	arise from $\SL_2$ rather than $\GL_2$. This introduces some subtle but important differences in the calculations. 
\end{rem}


\subsection{Prime power case}  The calculation of various orbits and stabilizers in the primary case is a crucial preliminary step 
for the calculation of $g\left(\G_0^1(\fn)\bs \cT\right)$. Thus, in this subsection we assume that $\fn=\fp^r$ is the $r$-th power of a prime 
$\fp$ for some $r\geq 1$. 

To determine $X_0$ and $Y_0$ we need to compute the number of orbits of $G\colonequals G_0=\SL_2(\F_q)$ and 
$$B\colonequals G_0\cap G_1 = \left\{\begin{pmatrix}  a & b \\ 0 & a^{-1}\end{pmatrix}\mid a\in \F_q^\times, b\in \F_q \right\}
$$
acting on  $\p^1(A/\fp^r)$. We also need to compute the number of cusps, which is the number of orbits of $G_{d-1}$ acting on $\p^1(A/\fp^r)$. 
These calculations will be carried out in three separate propositions. 

First, we make some observations about $\p^1(A/\fp^r)$. Let $z=(u:v)\in \p^1(A/\fp^r)$. 
Either $u$ or $v$ must be relatively prime to $\fp$, so must be a unit in $A/\fp^r$. Identifying the elements of $A/\fp^r$ 
with polynomials in $A$ of degree $\leq r\cdot \deg(\fp)-1$, either $v=0$ or  there is a well-defined $0\leq h\leq r-1$ 
such that  $v=w\fp^h$, where $w\in A$ has degree $\leq \deg(\fp)(r-h)-1$ and is relatively prime to $\fp$. The \textit{height} of $z$ is $h(z)=h$ if $v\neq 0$, 
and $h(z)=r$ if $v=0$. 

If $h(z)=0$, then $z=(u:1)$ for a unique $u\in A/\fp^r$ depending on $z$. If $1\leq h(z)\leq r-1$, then 
$z=(1:w\fp^h)$ for a unique $w$ as above. If $h(z)=r$, then $z=(1:0)$.  Therefore, the set $ \p^1(A/\fp^r)(h)$ 
of points on $ \p^1(A/\fp^r)$ of height $h$ has cardinality: 
\begin{equation} \label{eqOrdPh}
	\# \p^1(A/\fp^r)(h) = 
	\begin{cases}
		\abs{\fp}^r, &  \text{if $h=0$}; \\ 
		\abs{\fp}^{r-h}-\abs{\fp}^{r-h-1}, & \text{if $1\leq h\leq r-1$}; \\ 
		1, & \text{if $h=r$}.
	\end{cases}
\end{equation}	

Summing all these numbers  we get: 

\begin{lem}\label{lemOrdP-prime}
	$\# \p^1(A/\fp^r)=\sum_{h=0}^r 	\# \p^1(A/\fp^r)(h) =\abs{\fp}^{r-1}(\abs{\fp}+1)$. 
\end{lem}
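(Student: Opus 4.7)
The proof is a direct computation: I simply add together the cardinalities \eqref{eqOrdPh} for $h = 0, 1, \ldots, r$ and observe a telescoping cancellation.

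First I would separate out the boundary terms: the contribution from $h = 0$ is $\abs{\fp}^r$ and the contribution from $h = r$ is $1$. The main work is the middle range $1 \leq h \leq r-1$, whose contribution is
$$
\sum_{h=1}^{r-1}\bigl(\abs{\fp}^{r-h} - \abs{\fp}^{r-h-1}\bigr).
$$
Reindexing by $j = r-h$ (so $j$ runs from $1$ to $r-1$), this becomes the telescoping sum $\sum_{j=1}^{r-1}(\abs{\fp}^j - \abs{\fp}^{j-1}) = \abs{\fp}^{r-1} - 1$.

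Combining the three pieces yields $\abs{\fp}^r + (\abs{\fp}^{r-1} - 1) + 1 = \abs{\fp}^r + \abs{\fp}^{r-1} = \abs{\fp}^{r-1}(\abs{\fp} + 1)$, as claimed. There is no real obstacle here; the only thing to be careful about is handling the edge cases $r = 1$ (where the middle sum is empty and the formula $\abs{\fp} + 1$ follows directly from the $h = 0$ and $h = r$ terms alone) and checking that the reindexing is correct.
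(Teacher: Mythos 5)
Your proof is correct and follows exactly the route the paper intends: the lemma is stated right after the cardinality count \eqref{eqOrdPh}, with the paper simply noting that summing those numbers gives the result, and your telescoping computation (including the $r=1$ edge case) fills in that summation explicitly. Nothing further is needed.
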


Suppose $z=(u:v) \in \p^1(A/\fn)$ is 
fixed by some element $\begin{pmatrix} a & b \\ c & d\end{pmatrix}\neq \pm 1$ of $G$. 

\noindent\underline{\textit{Case 1}}: $h(z)=0$. \\ 
In this case, $z=(u:1)$ and the equation resulting from $\begin{pmatrix} a & b \\ c & d\end{pmatrix} (u:1)=(u:1)$ 
is 
$$
cu^2+(d-a)u-b=0. 
$$

\underline{\textit{Case 1.1}}: The polynomial $cx^2+(d-a)x-b\in \F_q[x]$ is irreducible and $c\neq 0$. \\ 
In this case, $u$ generates $\F_{q^2}$ over $\F_q$ and $\F_{q^2}=\F_q[u] \hookrightarrow A/\fp^r$, so $\deg(\fp)$ is even. 
To find the stabilizer of $(u:1)$, we may apply $\begin{pmatrix} 1 & (d-a)/2c\\ 0 & 1\end{pmatrix}$ to $(u:1)$, so as to assume
that $u$ is a root of $x^2=t$ for some non-square $t\in \F_q$. We can represent each 
element of $\F_{q^2}$ as $\alpha+\beta u$ for some $\alpha, \beta\in \F_q$.  The map 
\begin{align}\label{eqNonSplTor}
	\F_{q^2}^\times & \To \GL_2(\F_q)\\ 
\nonumber \alpha+\beta u& \longmapsto \begin{pmatrix} \alpha & \beta t\\ \beta & \alpha\end{pmatrix}
\end{align}
is an embedding into $\GL_2(\F_q)$. These are the matrices that fix $(u:1)$ in  $\GL_2(\F_q)$.
The matrices which are in $G$ are those for which $\alpha^2-t\beta^2=1$, or equivalently 
$$
\Nr_{\F_q^2/\F_q}(\alpha+\beta u)=1.
$$
Hence, 
$$\Stab_{G}(z)\cong (\F_{q^2}^\times)^1,
$$
where $(\F_{q^2}^\times)^1\colonequals \ker(\Nr_{\F_q^2/\F_q}\colon \F_{q^2}^\times\to \F_q^\times)$. 
Since $\Nr_{\F_q^2/\F_q}\colon \F_{q^2}^\times \to \F_q^\times$ 
is surjective, we conclude that  $\# (\F_{q^2}^\times)^1=q+1$.
The length of the orbit of $z$ is 
$$\#G/ \# \Stab_{G}(z)=q(q^2-1)/(q+1)=q(q-1)=\#  (\F_{q^2}-\F_q).$$
This implies that those $z$ that fall under this case form one orbit. 

\underline{\textit{Case 1.2}}: $c=0$. \\ 
This case is equivalent to $u\in \F_q$, i.e., $z\in \p^1(\F_q)\hookrightarrow \p^1(A/\fp^r)$. 
The stabilizer of $(0:1)$ in $G$ is $B$, so the 
length of its orbit is $q(q^2-1)/q(q-1)=q+1$. Thus, all elements of $\p^1(\F_q)$ are in one orbit. 

\underline{\textit{Case 1.3}}: $cx^2+(d-a)x-b$ is reducible in $\F_{q}[x]$ and $c\neq 0$. \\ 
In this case, $u\in A/\fp^r$ satisfies $(u-e)(u-e')=0$, with $e, e'\in \F_q$. 
Thus, $(u-e)$ is a zero-divisor, i.e., $u-e\in \fp A/\fp^r$. Note that $e-e'\in \fp A/\fp^r$, so $e=e'$. 
We conclude that $z$ is in the set 
$$
S_1=\left\{(u:1)\mid u\in A/\fp^r-\F_q, \exists e\in \F_q \text{ such that } (u-e)^2=0\right\}. 
$$
To determine the stabilizer, we may assume that $u^2=0$ by first acting by $\begin{pmatrix} 1 & e\\ 0 & 1\end{pmatrix}$ on $z$. Then 
$$
\Stab_G(z) = \left\{\begin{pmatrix} \alpha & 0\\ \gamma & \alpha\end{pmatrix} \mid  \alpha=\pm 1, \gamma\in \F_q\right\}. 
$$
Therefore, each orbit has length $q(q^2-1)/2q=(q^2-1)/2$. To compute 
$\# S_1$, take any element of $\fp^{\lfloor r/2\rfloor}A/\fp^r$, except $0$. For each $e$, we get $(\abs{\fp}^{\lfloor r/2\rfloor}-1)$ elements of $S_1$, so 
$\# S_1= q(\abs{\fp}^{\lfloor r/2\rfloor}-1)$. We conclude that the number of orbits is 
$$
2q(\abs{\fp}^{\lfloor r/2\rfloor}-1)/(q^2-1). 
$$

\noindent\underline{\textit{Case 2}}: $h(z)\geq 1$. \\ 
In this case, $u$ must be a unit, so $z = (1:v)$.  Similar to Case 1, we get 
$dv^2+(c-b)v-a=0$. But since $v\in \fp A/\fp^r$, this is possible if and only if $v^2=0$, $a=0$, and $c=d$. We conclude that 
$z$ is in the set 
$$
S_2=\{(1:v)\mid 0\neq v\in A/\fp^r, v^2=0\}. 
$$
By an argument similar to Case 1.3, we get $\# S_2= (\abs{\fp}^{\lfloor r/2\rfloor}-1)$ and the number of orbits is 
$$
2(\abs{\fp}^{\lfloor r/2\rfloor}-1)/(q^2-1). 
$$

We summarize our previous computations into a proposition: 

\begin{prop}\label{prop:OrbitsofG1} The points of $\p^1(A/\fp^r)$, with their $G$-stabilizers and orbits, are:
	\begin{itemize}
	\item[$($a$)$] 
		$\displaystyle{\begin{cases}
				z\in \p^1(\F_q)\hookrightarrow \p^1(A/\fp^r); \\
				\Stab_G(z) \cong B; \\
				\text{one orbit of length } (q+1). 
			\end{cases}}$
		
		\item[$($b$)$] 
		$\displaystyle{\begin{cases}
			\deg(\fp) \text{ is even and } z\in \left(\p^1(\F_{q^2})-\p^1(\F_q)\right)\hookrightarrow \p^1(A/\fp^r);\\ 
		\Stab_G(z)\cong (\F_{q^2}^\times)^1; \\ 
		\text{one orbit of length }q(q-1). 
			\end{cases}}$
		
	\item[$($c$)$] 
	$\displaystyle{\begin{cases}
			z\in S_1\cup S_2;\\
			\Stab_G(z)\cong \left\{\pm  \begin{pmatrix} 1 & b\\ 0 & 1\end{pmatrix} \mid b\in \F_q\right\}; \\  
		2(\abs{\fp}^{\lfloor r/2\rfloor}-1)/(q-1)\text{ orbits, each of length }(q^2-1)/2. 
		\end{cases}}$
		
	\item[$($d$)$] 
	$\displaystyle{\begin{cases}
			\text{All other points};\\ 
			\Stab_G(z)\cong \{\pm 1\};\\
			\frac{2}{q(q^2-1)}\left(\abs{\fp}^{r-1}(\abs{\fp}+1) - (q+1)(\abs{\fp}^{\lfloor r/2\rfloor}-1) -\begin{cases} q+1, &  
				\text{if $\deg(\fp)$ is odd} \\ q^2+1, & \text{if $\deg(\fp) $ is even}\end{cases}\right)\\ 
			\quad \text{ orbits, each of length }q(q^2-1)/2. 
		\end{cases}}$
	
	\end{itemize}
\end{prop}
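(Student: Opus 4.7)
The proof is essentially a bookkeeping synthesis of the case analysis in Cases~1.1--1.3 and Case~2 above, followed by an orbit-counting ``subtract from the total'' step for part~(d).

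First I would record the structural observation underlying the whole argument: the preceding case split has enumerated every pair $(z,\gamma)$ with $\gamma \in G \setminus \{\pm 1\}$ and $\gamma \cdot z = z$. Consequently, a point $z \in \p^1(A/\fp^r)$ has $G$-stabilizer strictly larger than $\{\pm 1\}$ if and only if $z$ lies in one of (a), (b), (c); otherwise $\Stab_G(z) = \{\pm 1\}$, which is what defines part (d). Checking this exhaustiveness, in particular that the nilpotent constraint in Case~2 really does force $z \in S_2$ with the stabilizer stated in (c), is the one subtle step.

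Parts (a) and (b) are then direct translations of Cases~1.2 and 1.1 respectively; part (b) requires $\deg(\fp)$ even because the embedding $\F_{q^2} \hookrightarrow A/\fp^r$ is only available then, and the stabilizer identification via \eqref{eqNonSplTor} restricted to $\det = 1$ gives $(\F_{q^2}^\times)^1$ of order $q+1$. For part (c), I would combine the sets $S_1$ and $S_2$: the two stabilizer subgroups exhibited in Cases~1.3 and 2 are conjugate in $G$ (by the Weyl element $\begin{pmatrix} 0 & -1 \\ 1 & 0 \end{pmatrix}$, which swaps upper- and lower-triangular unipotents), both of order $2q$, so each orbit has length $(q^2-1)/2$. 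Summing $\#S_1 + \#S_2 = (q+1)(\abs{\fp}^{\lfloor r/2\rfloor}-1)$ and dividing yields the $2(\abs{\fp}^{\lfloor r/2\rfloor}-1)/(q-1)$ orbits claimed.

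For part (d), each remaining orbit has length $|G|/2 = q(q^2-1)/2$. Using Lemma~\ref{lemOrdP-prime} I would subtract from $\#\p^1(A/\fp^r) = \abs{\fp}^{r-1}(\abs{\fp}+1)$ the cardinality $(q+1)$ from (a), the cardinality $(q+1)(\abs{\fp}^{\lfloor r/2\rfloor}-1)$ from (c), and, when $\deg(\fp)$ is even, the additional $q(q-1)$ from (b). The identity $(q+1) + q(q-1) = q^2+1$ produces exactly the parity split in the stated formula, and dividing the remainder by $q(q^2-1)/2$ gives the claimed orbit count. The main obstacle throughout is verifying case exhaustion and the conjugacy of the two stabilizers in (c); the remainder is arithmetic simplification.
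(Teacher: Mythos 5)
Your proposal is correct and matches the paper's own route: the proposition there is explicitly a summary of the preceding case analysis (Cases 1.1--1.3 and Case 2), with (c) obtained by merging $S_1$ and $S_2$ (whose representative stabilizers are the lower- and upper-unipotent groups times $\pm 1$, conjugate as you note) and (d) obtained by the same subtract-and-divide count using Lemma \ref{lemOrdP-prime}, where $(q+1)+q(q-1)=q^2+1$ gives the parity split. No essential difference in method.
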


\begin{prop}\label{prop:orbitsB}
	The $G$-orbits of Proposition \ref{prop:OrbitsofG1} split  as follows into $B$-orbits: 
	\begin{itemize}
		\item[(a)] Two $B$-orbits: $\{(1:0)\}$ and $\{(u:1)\mid u \in \F_q\}$.  
		\item[(b)] Two $B$-orbits, both of length $q(q-1)/2$;
		\item[(c)] Two $B$-orbits: one of length $(q-1)/2$ given by elements of height $\geq 1$, 
		and the other of length $q(q-1)/2$, given by elements of height $0$. 
		\item[(d)] $(q+1)$ $B$-orbits of length $q(q-1)/2$. 
	\end{itemize}
\end{prop}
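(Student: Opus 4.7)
The uniform tool throughout is the orbit--stabilizer principle: for any $z\in \p^1(A/\fp^r)$, the $B$-orbit of $z$ has length $|B|/|\Stab_G(z)\cap B|$, so if I pick good representatives inside each $G$-orbit of Proposition \ref{prop:OrbitsofG1} and compute the intersection $\Stab_G(z)\cap B$, the $B$-orbit sizes emerge immediately; comparing them with the known $G$-orbit size forces the number of $B$-orbits. The key preliminary observation is that $B$ preserves the height function $h$: for $\begin{pmatrix} a & b \\ 0 & a^{-1}\end{pmatrix}(u:v)=(au+bv:a^{-1}v)$, one has $a^{-1}v=0$ iff $v=0$, and otherwise the $\fp$-adic valuation of $a^{-1}v$ equals that of $v$. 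So any $B$-orbit lies in a single height stratum.

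For (a), take the representatives $(1:0)$ and $(0:1)$. Clearly $B$ fixes $(1:0)$, while $\begin{pmatrix} a & b \\ 0 & a^{-1}\end{pmatrix}(0:1)=(b:a^{-1})\sim(ab:1)$ shows that $B$ acts transitively on $\{(u:1):u\in \F_q\}$. For (b), take $z=(u:1)$ with $u^2=t$ a non-square in $\F_q$; the parametrization \eqref{eqNonSplTor} of $\Stab_G(z)$ by $\alpha+\beta u$ with $\Nr(\alpha+\beta u)=1$ lies in the upper-triangular $B$ precisely when $\beta=0$, i.e. $\alpha=\pm 1$. Thus $\Stab_G(z)\cap B=\{\pm I\}$, the $B$-orbit length is $q(q-1)/2$, and since the $G$-orbit has length $q(q-1)$, there are exactly two $B$-orbits. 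For (d), $\Stab_G(z)=\{\pm I\}\subset B$, so each $B$-orbit has length $q(q-1)/2$ and the $G$-orbit of length $q(q^2-1)/2$ breaks into $q+1$ of them.

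The subtle case is (c). For a representative $(u:1)\in S_1$ normalized to $u^2=0$, the stabilizer $\Stab_G((u:1))=\bigl\{\pm\begin{pmatrix} 1 & 0 \\ \gamma & 1\end{pmatrix}\bigr\}$ is lower-triangular, meeting $B$ only in $\{\pm I\}$; hence the $B$-orbit has length $q(q-1)/2$. For a representative $(1:v)\in S_2$ with $v^2=0$, a direct computation of $\begin{pmatrix} a & b \\ c & d\end{pmatrix}(1:v)=(1:v)$ (using that $c=(a-d)v$ forces $c=0$ and $a=d=\pm 1$ because $\F_q\cap \fp=0$) gives $\Stab_G((1:v))=\bigl\{\pm\begin{pmatrix} 1 & b \\ 0 & 1\end{pmatrix}:b\in \F_q\bigr\}$, which already sits inside $B$ and has order $2q$; so the $B$-orbit length is $(q-1)/2$. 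The Weyl-type element $\begin{pmatrix}0 & -1 \\ 1 & 0\end{pmatrix}\in G$ sends $(u:1)\mapsto (1:-u)$, showing that each $G$-orbit really does contain both a point of $S_1$ and a point of $S_2$, hence both a height-$0$ and a height-$\geq 1$ representative. Since $(q-1)/2 + q(q-1)/2 = (q^2-1)/2$ equals the full $G$-orbit length from Proposition \ref{prop:OrbitsofG1}(c), these two $B$-orbits exhaust the $G$-orbit.

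The main obstacle is case (c): the fact that height is $B$-invariant but not $G$-invariant could allow the pieces $S_1$ and $S_2$ either to form distinct $G$-orbits (giving four $B$-orbits) or to merge (giving two). Producing the explicit Weyl element that mixes them, and then confirming via the exact orbit-length equation $(q-1)/2 + q(q-1)/2 = (q^2-1)/2$ that no further splitting remains, is the delicate step that pins down the claim.
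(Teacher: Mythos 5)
Your argument is correct and is essentially the paper's own: the same representatives, the same stabilizer computations (with $\Stab_G(z)\cap B$ in place of directly listing the $B$-orbit), and the same length bookkeeping $(q-1)/2+q(q-1)/2=(q^2-1)/2$ in case (c), where your Weyl-element remark plays the role of the paper's observation that the height-$0$ and height-$\geq 1$ parts of the single $G$-orbit are each $B$-stable. No substantive difference or gap.
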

\begin{proof}
	(a) This is clear since upper-triangular unipotent  matrices act transitively on the set $\{(u:1)\mid u \in \F_q\}$ and fix $(1:0)$. 
	
	(b) We have $\begin{pmatrix} a & b \\ 0 & a^{-1}\end{pmatrix} (u:1) = (a^2 u+ab: 1)$. Suppose $u$ generates 
	$\F_{q^2}^\times$ over $\F_q$, then every element of $\F_{q^2}$ can be uniquely written as $au+b$ with $a, b\in \F_q$. 
	If we consider only $a^2 u +b$ with $a\in \F_q^\times$ and $b\in \F_q$, then we get exactly $q\cdot (q-1)/2$ elements. 
	
	(c) Consider the orbit of $z=(u:1)$ with $u^2=0$. The $G$-orbit  has length $(q^2-1)/2$. The elements with height $0$ 
	are $(\frac{au+b}{cu+d}:1)$, $d\neq 0$; the elements with height $\geq 1$ are $(1:\frac{cu}{au+b})$. These two sets 
	are stable under the action of $B$, so there are at least two orbits. The $B$-orbit of $(u:1)$ consists of the elements 
	$a^2u+b$, $a\in \F_q^\times$, $b\in \F_q$, so its length is $q(q-1)/2$. The $B$-orbit of $(1:u)$ has at least $(q-1)/2$ elements 
	(these are $(1:a^2u)$, $a\in \F_q^\times$). Since $q(q-1)/2+(q-1)/2=(q^2-1)/2$, we see that these two $B$-orbits fill 
	the whole $G$-orbit. 
	
(d) This is clear since the stabilizer  of these points is $\{\pm 1\}$ in $G$. 
\end{proof}

Let $d=r\cdot \deg(\fp)$.  We calculate the $G_{d-1}$-orbits on $\displaystyle{\p(\A/\fp^r)=\bigcup_{0\leq h\leq r} \p(h)}$, where 
$$
\p(h)=\{z\in \p(A/\fp^r)\mid h(z)=h\}. 
$$
From the definition of height, it is easy to see that $h(\gamma z)=h(z)$ for $\gamma\in G_{i}$, $i\geq 1$. Thus, $\p(h)$ 
is stable under $G_{d-1}$. 

\begin{prop}\label{propCusps-h}
 $\p(h)$ splits as follows under $G_{d-1}$: 
	\begin{itemize}
		\item[(a)] $h=0$: One orbit of length $\abs{\fp}^r$, with order of stabilizer of an element equal to $(q-1)$;
		\item[(b)] $1\leq h\leq \lfloor r/2\rfloor$: $2\abs{\fp}^{h-1}\frac{\abs{\fp}-1}{q-1}$ orbits of length $\frac{q-1}{2}\abs{\fp}^{r-2h}$, 
		with stabilizer of order $2\abs{\fp}^{2h}$;
		\item[(c)] $ \lfloor r/2\rfloor< h< r$: $2\abs{\fp}^{r-h-1}\frac{\abs{\fp}-1}{q-1}$ orbits of length $2$, with stabilizer of order $2\abs{\fp}^r$; 
		\item[(d)] $h=r$: One orbit of length $1$, order of stabilizer $(q-1)\abs{\fp}^r$. 
	\end{itemize}
\end{prop}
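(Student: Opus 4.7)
The plan is to exploit the canonical parameterization of $\p^1(A/\fp^r)$ by heights together with the orbit-stabilizer theorem applied to the action of $G_{d-1}$. A point of height $h$ is written in normal form as $(u:1)$ with $u\in A/\fp^r$ when $h=0$, as $(1:0)$ when $h=r$, and as $(1:w\fp^h)$ with $w$ running through representatives of $(A/\fp^{r-h})^\times$ when $1\le h\le r-1$. Since $|G_{d-1}|=(q-1)\abs{\fp}^r$, it will suffice to pin down the stabilizer of one point in each stratum.

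I would first dispose of the two extreme cases. For $h=r$, the entire group $G_{d-1}$ fixes $(1:0)$, giving (d). For $h=0$ the action reads $(u:1)\mapsto(a^2u+ab:1)$; specializing $a=1$ and letting $b$ range over a complete set of residues modulo $\fp^r$ shows that $\p(0)$ is a single $G_{d-1}$-orbit, and the stabilizer equation $(a^2-1)u=-ab$ admits a unique $b\in A/\fp^r$ for each $a\in\F_q^\times$, proving (a).

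The core of the argument is the intermediate range $1\le h\le r-1$. I would normalize $\gamma(1:w\fp^h)=(a+bw\fp^h:a^{-1}w\fp^h)$ by inverting the unit $a+bw\fp^h\in(A/\fp^r)^\times$ via the terminating geometric series. After simplification, the fixed-point condition reduces to
\begin{equation*}
1-a^2\equiv abw\fp^h\pmod{\fp^{r-h}}.
\end{equation*}
Since the right side lies in $\fp A/\fp^{r-h}$, one must have $1-a^2\equiv 0\pmod{\fp}$; as $a\in\F_q^\times$, $\deg(\fp)\geq 1$, and $q$ is odd, this forces $a=\pm 1$. The congruence then collapses to $b\equiv 0\pmod{\fp^{\max(0,r-2h)}}$, which for $1\le h\le\lfloor r/2\rfloor$ admits $\abs{\fp}^{2h}$ solutions and for $\lfloor r/2\rfloor<h<r$ is vacuous, so all $\abs{\fp}^r$ values of $b$ are admissible. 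This yields stabilizer orders $2\abs{\fp}^{2h}$ and $2\abs{\fp}^r$ for (b) and (c) respectively, and applying orbit-stabilizer with $|\p(h)|=\abs{\fp}^{r-h-1}(\abs{\fp}-1)$ from \eqref{eqOrdPh} delivers the stated orbit lengths and counts.

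The delicate step I anticipate is the normalization that reduces $\gamma(1:w\fp^h)=(1:w\fp^h)$ to a single congruence modulo $\fp^{r-h}$: the geometric expansion $(a+bw\fp^h)^{-1}=a^{-2}\sum_k(-a^{-1}bw\fp^h)^k$ must be truncated correctly in $A/\fp^r$, and one must verify that the higher-order cross terms in the resulting identity $w\fp^h=a^2w\fp^h+abw^2\fp^{2h}+\cdots$ of second coordinates do not contribute after reduction modulo $\fp^{r-h}$ and cancellation of the unit factor $w$. Once this identity is in hand, the remaining case analysis is elementary counting in $\F_q^\times$ and $A/\fp^r$.
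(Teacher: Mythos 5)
Your proof is correct and follows essentially the same route as the paper: you reduce the fixed-point condition for $\gamma=\begin{pmatrix} a & b\\ 0 & a^{-1}\end{pmatrix}$ acting on a normal-form representative to the congruence $1-a^2\equiv abw\fp^h \pmod{\fp^{r-h}}$, forcing $a=\pm 1$ and $\fp^{\max(0,\,r-2h)}\mid b$ exactly as in the paper (which sidesteps the geometric-series inversion you worry about by simply cross-multiplying with the unit $a+bw\fp^h$), and then you apply orbit--stabilizer together with \eqref{eqOrdPh}. One remark: in case (c) your computation gives orbits of length $(q-1)/2$, not $2$; this agrees with the paper's own proof, and the ``length $2$'' in the statement is a typo, being inconsistent with the stabilizer order $2\abs{\fp}^r$ stated there.
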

\begin{proof}  Let $\gamma=\begin{pmatrix} a & b\\ 0 & a^{-1}\end{pmatrix}\in G_{d-1}$. 
	
	If $h=0$, then $z=(u:1)$ 
	and $\gamma z = (a(au+b):1)$. Therefore, $\gamma\in \Stab_{G_{d-1}}(z)$ if and only if $a\in \F_q^\times$ and $b=(1-a^2)u/a$. 
	This implies that $\#\Stab_{G_{d-1}}(z)=q-1$. Hence the length of the orbit of $z$ is 
	$$\#G_{d-1}/\#\Stab_{G_{d-1}}(z) =(q-1)\abs{\fp}^r/(q-1)=\abs{\fp}^r. 
	$$
	Since $\# \p^1(0)=\abs{\fp}^r$, points of height $0$ form a single orbit. 
	
	If $1\leq h\leq r-1$, then $z=(1:w\fp^h)$, where $w\in A$ has degree $\leq \deg(\fp)(r-h)-1$ and is relatively prime to $\fp$. 
	In this case, $\gamma\in \Stab_{G_{d-1}}(z)$ if and only if $(a+wb\fp^h)^{-1}a^{-1}w\fp^h= w\fp^h$ in $A/\fp^r$. 
	This is equivalent to $a^{-1}w\fp^h=(a+wb\fp^h) w\fp^h$, which itself is equivalent to $a=\pm 1$ and $\fp^r\mid b\fp^{2h}$. 
	
	If $1\leq h\leq \lfloor r/2\rfloor$, then $\fp^{r-2h}\mid b$. The number of such elements of degree $\leq d-1$ 
	is $\abs{\fp}^{2h}$. Thus, $\#\Stab_{G_{d-1}}(z)=2\abs{\fp}^{2h}$. The 
	length of the orbit of $z$ is $(q-1)\abs{\fp}^r/2\abs{\fp}^{2h}=\frac{q-1}{2}\abs{\fp}^{r-2h}$.  The number of orbits is 
	$$
	\# \p(h)/ \left(\frac{q-1}{2}\abs{\fp}^{r-2h}\right)= 2\frac{\abs{\fp}^{r-h}-\abs{\fp}^{r-h-1}}{(q-1)\abs{\fp}^{r-2h}} = 2\abs{\fp}^{h-1}\frac{\abs{\fp}-1}{q-1}. 
	$$
	
	If $ \lfloor r/2\rfloor< h< r$, then $b$ can be an arbitrary element of degree $\leq d-1$, so $\#\Stab_{G_{d-1}}(z)=2\fp^r$. The 
	length of the orbit of $z$ is $(q-1)\abs{\fp}^r/2\abs{\fp}^r=\frac{q-1}{2}$.  The number of orbits is 
	$$
	\# \p(h)/ \frac{q-1}{2}= 2\frac{\abs{\fp}^{r-h}-\abs{\fp}^{r-h-1}}{q-1} = 2\abs{\fp}^{r-h-1}\frac{\abs{\fp}-1}{q-1}. 
	$$
	
	If $h=r$, then $z=(0:1)$. In this case, clearly there is only one orbit of length $1$ and $\Stab_{G_{d-1}}(z)=G_{d-1}$. 
\end{proof}

Adding the number of orbits of various height from Proposition \ref{propCusps-h}, we get 
\begin{align}\label{eqNcusps}
\# \text{cusps} = \# X_{d-1} &= 2+ 2\frac{\abs{\fp}-1}{q-1}\left(\sum_{h=1}^{ \lfloor r/2\rfloor} \abs{\fp}^{h-1} + 
\sum_{h=\lfloor r/2\rfloor+1}^{r-1}\abs{\fp}^{r-h-1}\right) \\ 
\nonumber &=  2+ \frac{2}{q-1} \left(\abs{\fp}^{\lfloor r/2\rfloor}+\abs{\fp}^{\lfloor (r-1)/2\rfloor}-2\right). 
\end{align}

\subsection{The general case}  

Now assume $0\neq \fn\in A$ is arbitrary and $\fn=\fp_1^{r_1}\cdots \fp_s^{r_s}$ is its prime decomposition.  
Correspondingly, $\p(A/\fn)$ splits according to the prime decomposition of $\fn$: 
\begin{equation}\label{eqP1decom}
\p^1(A/\fn)\overset{\sim}{\To} \p^1(A/\fp_1^{r_1})\times \cdots\times  \p^(A/\fp_s^{r_s}),  
\end{equation}
with $\SL_2(A)$ acting diagonally.  
Under this decomposition, the stabilizer in $G_n$, $n\geq 0$, of a point $\underline{z}=(z_1, \dots, z_s)\in \p(A/\fn)$ is 
\begin{equation}\label{eqStabG}
\Stab_{G_n}(\underline{z}) =\bigcap_{i=1}^s \Stab_{G_n}(z_i). 
\end{equation}
Let 
$$
\epsilon(\fn)\colonequals \#\p^1(A/\fn) = \prod_{i=1}^s\abs{\fp_i}^{r_i-1}(\abs{\fp_i}+1),
$$
where the second equality follows from \eqref{eqP1decom} and Lemma \ref{lemOrdP-prime}. Also, define
$$
\me(\fn)=
\begin{cases}
	1, & \text{if $\deg(\fp_i)$ is even for all $1\leq i\leq s$;} \\ 
	0, & \text{otherwise.}
\end{cases}
$$

Let 
$$U=\left\{\begin{pmatrix} 1 & b\\ 0 & 1\end{pmatrix} \mid b\in \F_q\right\}, \quad 
\T=\left\{\begin{pmatrix} a& 0\\ 0 & a^{-1}\end{pmatrix} \mid a\in \F_q^\times\right\}, \quad \T^\mathrm{ns}\cong (\F_{q^2}^\times)^1, 
$$ 
where $\T^\mathrm{ns}$ is any of the subgroups of $G$ resulting from an embedding \eqref{eqNonSplTor} (any two of these are 
conjugate in $\GL_2(\F_q)$). Given a subgroup $H$ of $G$ and $g\in \GL_2(\F_q)$, 
let $H^g\colonequals g^{-1}H g\subseteq G$. The following lemma is an immediate consequence of \cite[Lem. 2.6]{GN}: 

\begin{lem}\label{lem:Lem2.6GN} Let $g, h, k\in \GL_2(\F_q)$. 
	\begin{enumerate}
		\item $\# (B^g\cap U^h)>1 \Rightarrow U^h\subset B^g$; 
		\item $B^g\neq B^h \Rightarrow \exists k\in \GL_2(\F_q) \text{ such that }B^g\cap B^h\cong \T$; 
		\item $B^g, B^h, B^k$ pairwise different $\Rightarrow B^g\cap B^h\cap B^k=\{\pm 1\}$; 
		\item $B^g\cap \T^\mathrm{ns}=\{\pm 1\}$; 
		\item $(\T^\mathrm{ns})^g\neq (\T^\mathrm{ns})^h \Rightarrow (\T^\mathrm{ns})^g\cap (\T^\mathrm{ns})^h=\{\pm 1\}$.  
		\end{enumerate}
\end{lem}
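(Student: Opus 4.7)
My plan is to derive each of (1)--(5) from the corresponding statement in \cite[Lem.~2.6]{GN}, which treats the analogous subgroups of $\GL_2(\F_q)$. The key observation is that $B$, $U$, $\T$, and $\T^\mathrm{ns}$ as defined above are precisely the $\SL_2(\F_q)$-intersections of their $\GL_2(\F_q)$-counterparts, and since conjugation by $\GL_2(\F_q)$ preserves $\SL_2(\F_q)$, the $\GL_2$-level intersections restrict cleanly to $\SL_2(\F_q)$. Because the center of $\SL_2(\F_q)$ is $\{\pm 1\}$ rather than the full scalar subgroup $\F_q^\times \cdot I$ of $\GL_2(\F_q)$, each assertion in \cite[Lem.~2.6]{GN} that a certain intersection is trivial modulo the $\GL_2$-center restricts to the assertion that the corresponding $\SL_2$-intersection is $\{\pm 1\}$. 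In (2), the conjugator $k$ is allowed to lie in $\GL_2(\F_q)$, which is needed because the two $\SL_2(\F_q)$-conjugacy classes of split tori are fused by $\GL_2(\F_q)$.

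Should a self-contained verification be preferred, I would use the dictionary between these subgroups and their actions on $\p^1(\F_q)$ and $\p^1(\F_{q^2})$: Borels are stabilizers of $\F_q$-rational points, split tori are stabilizers of unordered pairs of $\F_q$-rational points, and non-split tori are stabilizers of Galois-conjugate pairs in $\p^1(\F_{q^2}) \setminus \p^1(\F_q)$. Then (2) and (3) reduce to counting fixed points on $\p^1(\F_q)$: three distinct fixed points force an element of $\SL_2(\F_q)$ to be scalar, hence $\pm 1$. For (4), a non-scalar element of $\T^\mathrm{ns}$ has no $\F_q$-rational eigenvector and therefore lies in no Borel, so the intersection reduces to the scalars. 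For (5), a regular element of a non-split torus has centralizer equal to its unique containing maximal torus, so two distinct non-split tori can only share $\pm 1$. For (1), I would use the standard fact that a nontrivial unipotent element of $\SL_2(\F_q)$ lies in a unique maximal unipotent subgroup, namely the unipotent radical of the unique Borel containing it; hence if $B^g \cap U^h$ contains some $u \neq 1$, then $U^h$ must coincide with the unipotent radical of $B^g$, giving $U^h \subset B^g$.

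The only real obstacle is bookkeeping around the center: for $q$ odd, $-I$ lies in every Borel and in every maximal torus (split or not) but is not unipotent, so its appearance in the intersections of (3)--(5) and its absence from the conclusion of (1) must each be traced carefully through the geometric picture above. Once this is handled, the lemma follows from \cite[Lem.~2.6]{GN} by a direct restriction argument with no further computation required.
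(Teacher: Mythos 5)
Your proposal is correct and takes essentially the same route as the paper, which offers no separate argument at all but simply records the lemma as an immediate consequence of \cite[Lem.~2.6]{GN}, i.e.\ exactly your restriction-to-$\SL_2(\F_q)$ step (your fixed-point verification on $\p^1(\F_q)$ and $\p^1(\F_{q^2})$ is a sound, self-contained backup). One minor slip in a parenthetical: all split maximal tori of $\SL_2(\F_q)$ are already conjugate under $\SL_2(\F_q)$ itself (the orbit count $\#\SL_2(\F_q)/2(q-1)=q(q+1)/2$ matches the number of unordered pairs of points of $\p^1(\F_q)$), so no fusion by $\GL_2(\F_q)$ is needed in (2); this is harmless since the statement allows $k\in\GL_2(\F_q)$ anyway.
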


\begin{prop}\label{prop:OrbitsofG-genN} The points of $\p^1(A/\fn)$, with their $G$-stabilizers and orbits, are:
	\begin{itemize}
		\item[$($a$)$] 
		$\displaystyle{\begin{cases}
				\underline{z}\in \p^1(\F_q)\overset{\mathrm{diag}}{\longhookrightarrow} \p^1(A/\fp^r), \text{i.e., $z_i=u$ for all $i$ and some fixed $u\in \p^1(\F_q)$}. \\
				\Stab_G(\underline{z}) \cong B. \\
				\text{There is one orbit of this type, of length } (q+1). 
		\end{cases}}$
	
	\item[$($a*$)$] 
	$\displaystyle{\begin{cases}
			\text{There exists a non-trivial disjoint partition $S\cup S'$ of $\{1,2,\dots, s\}$}\\ 
			\text{and $u\neq v\in \p^1(\F_q)$ such that $z_i=u$ if $i\in S$ and $z_i=v$ if $i\in S'$}. \\ 
			\Stab_G(\underline{z}) \cong \left\{\begin{pmatrix} a & 0 \\ 0 & a^{-1}\end{pmatrix}\mid a\in \F_q^\times\right\}\cong \F_q^\times. \\
			\text{There are $2^{s-1}-1$ orbits of this type, each of length } q(q+1). 
	\end{cases}}$
		
		\item[$($b$)$] 
		$\displaystyle{\begin{cases}
				\text{For all $1\leq i\leq s$, $\deg(\fp_i)$ is even and there an irreducible quadratic}\\  
				\text{$x^2+ax+b$ over $\F_q$ such that each $z_i$ is a root of this quadratic.} \\ 
				\Stab_G(\underline{z})\cong (\F_{q^2}^\times)^1. \\ 
				\text{There are $2^{s-1}$ orbits of this type, each of length }q(q-1). 
		\end{cases}}$
		
		\item[$($c$)$] 
		$\displaystyle{\begin{cases}
				\underline{z}\not\in \p^1(\F_q)\overset{\mathrm{diag}}{\longhookrightarrow} \p^1(A/\fp^r), \text{but there exists $y\in \p^1(\F_q)$ such that for all $1\leq i\leq s$: }\\ 
			\quad 	\text{if $y=(c:1)$ then $z_i=(u_i:1)$, $(u_i-c)^2=0$, }\\ 
			\quad 	\text{if $y=(1:0)$ then $z_i=(1:v)$, $v^2=0$}. \\ 
				\Stab_G(\underline{z})\cong \left\{\pm  \begin{pmatrix} 1 & b\\ 0 & 1\end{pmatrix} \mid b\in \F_q\right\}; \\  
				\text{There are $\frac{2}{q-1}\prod_{i=1}^s (\abs{\fp_i}^{\lfloor r_i/2\rfloor}-1)$ orbits of this type, each of length $(q^2-1)/2$. }
		\end{cases}}$
		
		\item[$($d$)$] 
		$\displaystyle{\begin{cases}
				\text{All other points}.\\ 
				\Stab_G(\underline{z})\cong \{\pm 1\}.\\
				\text{There are} \\ \frac{2}{q(q^2-1)}\left(\epsilon(\fn)-(q+1)-(2^{s-1}-1)q(q+1) -2^{s-1} q(q-1)\me(\fn)-(q+1)\prod_{i=1}^s (\abs{\fp_i}^{\lfloor r_i/2\rfloor}-1)\right)\\
			\text{orbits of this type, each of length }q(q^2-1)/2. 
		\end{cases}}$
		
	\end{itemize}
\end{prop}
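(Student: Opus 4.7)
The plan is to exploit the decomposition $\p^1(A/\fn) \cong \prod_{i=1}^s \p^1(A/\fp_i^{r_i})$ with its diagonal $\SL_2(A)$-action and the stabilizer formula \eqref{eqStabG}, so that each $\Stab_G(\underline{z})$ is built from the prime-power stabilizers classified in Proposition \ref{prop:OrbitsofG1}. I would then use Lemma \ref{lem:Lem2.6GN} to sort out the possible non-trivial intersections and compute the orbit data case-by-case.

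First I would enumerate the isomorphism types of $\Stab_G(\underline{z}) = \bigcap_{i=1}^s \Stab_G(z_i)$. By Proposition \ref{prop:OrbitsofG1}, each factor is $\GL_2(\F_q)$-conjugate to one of $B$, $(\F_{q^2}^\times)^1$, $\pm U$, or $\{\pm 1\}$. Parts (4) and (5) of Lemma \ref{lem:Lem2.6GN} show that any non-split torus is incompatible with any Borel and with any distinct non-split torus; therefore, if any factor is conjugate to $(\F_{q^2}^\times)^1$ and the overall intersection is non-trivial, then every factor must equal the \emph{same} non-split torus, which gives case (b). In the remaining non-trivial possibilities each factor is either $\{\pm 1\}$ or sits inside a Borel, and parts (1), (2), (3) of Lemma \ref{lem:Lem2.6GN} show that the intersection is one of $B^g$ (all factors equal a common $B^g$, case (a)), $\pm U^g$ (all factors lie in a common $B^g$, at least one being a proper $\pm U^g$, case (c)), or a split torus $\T$ (exactly two distinct Borels appear, case (a*)); three or more distinct Borels collapse the intersection to $\{\pm 1\}$.

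Next I would match each stabilizer type with its fixed locus in $\p^1(A/\fp_i^{r_i})$, which pins down the tuples realizing that stabilizer. A direct calculation shows that $B^g$ fixes only the point $y = g\cdot(1:0) \in \p^1(\F_q)$; that $\T^g$ fixes exactly its two rational fixed points on $\p^1(\F_q)$, even at higher level; that $(\F_{q^2}^\times)^{1,g}$ fixes precisely the two conjugate roots of its defining irreducible quadratic, which lift uniquely (by Hensel) from $\p^1(\F_{q^2})$ to $\p^1(A/\fp_i^{r_i})$ exactly when $\deg(\fp_i)$ is even, forcing $\me(\fn)=1$ in case (b); and that $\pm U^g$ fixes the $|\fp_i|^{\lfloor r_i/2\rfloor}$ points of the form $(u:1)$ with $(u-c)^2 = 0$ when $y = (c:1)$, and dually when $y = (1:0)$. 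These computations yield the explicit tuple descriptions in each of (a), (a*), (b), (c); case (d) collects the complement.

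The orbit counts then reduce to bookkeeping, with orbit lengths supplied by orbit-stabilizer. For (a*), $\SL_2(\F_q)$ acts $2$-transitively on $\p^1(\F_q)$, so for each of the $2^{s-1}-1$ unordered bipartitions $\{S,S'\}$ of $\{1,\ldots,s\}$ all valid tuples lie in a single $G$-orbit of length $q(q+1)$. For (b), I count the $q(q-1)/2$ irreducible quadratics against $2^s$ choices of roots and divide by the orbit length $q(q-1)$ to obtain $2^{s-1}$ orbits. For (c), I sum the fixed-locus cardinality $\prod_i |\fp_i|^{\lfloor r_i/2\rfloor}$ over $y \in \p^1(\F_q)$, subtract the single diagonal tuple already lying in case (a), and divide by the orbit length $(q^2-1)/2$. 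Finally, (d) is obtained by subtracting the previous point counts from $\epsilon(\fn) = \#\p^1(A/\fn)$ and dividing by $q(q^2-1)/2$.

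The main obstacle is keeping the classification tight and handling the boundary between cases (a), (a*), and (c): in particular, the ``mixed'' tuples in case (c) in which some $z_i$ coincide with the common reduction $y \in \p^1(\F_q)$ while others lie in $S_1 \cup S_2$ still produce stabilizer $\pm U^g$ and must be counted, distinguishing them from the fully diagonal tuple of case (a) and from the all-$\p^1(\F_q)$ tuples of case (a*).
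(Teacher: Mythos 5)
Your strategy is the same as the paper's: decompose $\p^1(A/\fn)$ as a product, combine \eqref{eqStabG} with Proposition \ref{prop:OrbitsofG1} and Lemma \ref{lem:Lem2.6GN} to constrain the possible non-central stabilizers, and then count points and divide by orbit lengths; parts (a), (a*), (b) come out exactly as in the paper. The problem is case (c), and it is precisely the point you flag at the end. For each $y\in\p^1(\F_q)$ you count \emph{all} tuples whose components lie in the fixed locus of $\pm U^g$, i.e.\ $(q+1)\bigl(\prod_{i=1}^s\abs{\fp_i}^{\lfloor r_i/2\rfloor}-1\bigr)$ points after removing the diagonal tuple, giving $\frac{2}{q-1}\bigl(\prod_{i=1}^s\abs{\fp_i}^{\lfloor r_i/2\rfloor}-1\bigr)$ orbits. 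The Proposition asserts $\frac{2}{q-1}\prod_{i=1}^s\bigl(\abs{\fp_i}^{\lfloor r_i/2\rfloor}-1\bigr)$ orbits, and the paper's own proof obtains this by counting only tuples in which \emph{every} $z_i$ is strictly of type (c) of Proposition \ref{prop:OrbitsofG1}, i.e.\ $z_i\neq y$ for all $i$; the ``mixed'' tuples you describe are not included there, and the complementary count in (d) is adjusted accordingly. The two counts coincide exactly when $s=1$ or all $r_i=1$ (i.e.\ $\fn$ a prime power or square-free), but differ when $s\geq 2$ and some $r_i\geq 2$, so as written your argument does not establish the formulas in (c) and (d) as stated.

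That said, your observation about the mixed tuples is mathematically sound: if $z_i=y$ exactly for some indices while the remaining $z_j$ lie in $S_1\cup S_2$ with the same distinguished point $y$, then $\Stab_G(\underline{z})=B^g\cap(\pm U^g)=\pm U^g$ has order $2q$, so such a point cannot have stabilizer $\{\pm 1\}$ and cannot be of type (d) as the statement claims (take, e.g., $\fn=\fp_1\fp_2^2$ with $\deg\fp_1=\deg\fp_2=1$ and $\underline{z}=\bigl((0:1),(\fp_2:1)\bigr)$). So if you pursue your route you must either state the corrected orbit numbers in (c)/(d), or restrict (c) as in the paper's proof and then say explicitly where the mixed tuples go. It is worth noting that the discrepancy is invisible in the genus computation: by Proposition \ref{propOrbitsofB-genN} a $G$-orbit of type (c) contributes $2-1=1$ to $\#Y_0-\#X_0$ and one of type (d) contributes $(q+1)-1=q$, which in both cases amounts to $2/(q^2-1)$ per point (and a mixed orbit, having stabilizer $\pm U^g$ and length $(q^2-1)/2$, likewise splits into two $B$-orbits), so \eqref{eqGenus}, the boxed genus formula and \eqref{eqGenRH} are unaffected by how these points are apportioned between (c) and (d).
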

\begin{proof}
	Suppose $\underline{z}=(z_1, \dots, z_s)$ has a stabilizer in $G$ strictly larger than the center $Z\colonequals \{\pm 1\}$. Due to \eqref{eqStabG}, 
	each $\Stab_{G_0}(z_i)$ is strictly larger than $Z$, so each $z_i$ must be of one of the types (a)-(c) in Proposition \ref{prop:OrbitsofG1}. 
	Moreover, because of the intersection properties in Lemma \ref{lem:Lem2.6GN}, all $z_i$ must be of type (a) or (c), or all must be of type (b). 
	(It is well-known 
	that the stabilizers of the same type are conjugate in $\GL_2(\F_q)$, e.g., any two Borel subgroups or non-split tori are conjugate, so we are 
	in a position to apply Lemma \ref{lem:Lem2.6GN}.) 
	
	If the $z_i$'s are all of type (a) but three of them are pairwise distinct, then $\Stab_{G}(\underline{z})=Z$ by  Lemma \ref{lem:Lem2.6GN} (3), 
	contradicting our assumption. Therefore, either all $z_i$'s are equal (case (a)), or they fall into two distinct subsets, with the elements 
	in each subset being equal to the same point of $\p^1(\F_q)$ (case (a*)). The number of points of type (a*) is 
	$$
	{q+1 \choose 2}\left({s \choose 1}+{s \choose 2}+\cdots+ {s \choose s-1}\right) =q(q+1)(2^{s-1}-1),
	$$
	where ${q+1 \choose 2}$ is the number of choices of two distinct points of $\p^1(\F_q)$ and $\sum_{i=1}^{s-1}{s \choose i}$ is the 
	number of ways of subdividing a set with $s$ elements into two disjoint non-empty subsets.  The stabilizer of each point of 
	type (a*) is isomorphic to $\F_q^\times$ by Lemma \ref{lem:Lem2.6GN} (2), so the $G$-orbit of such point has length $\#G/(q-1)=q(q+1)$. 
	Therefore, the number of orbits is $2^{s-1}-1$. This proves (a) and (a*). 
	
	If the $z_i$'s are of type (b), then they satisfy an irreducible quadratic equation; this equation must be the \textit{same} for all $i$ by 
	Lemma \ref{lem:Lem2.6GN} (5). Since by Proposition \ref{prop:OrbitsofG1} there are precisely $2$ solutions in each $A/\fp_i^{r_i}$, there 
	are $\frac{q(q-1)}{2}\cdot 2^s$ such $\underline{z}$, where $\frac{q(q-1)}{2}$ is the number of Galois conjugate pairs of points in $\p^1(\F_{q^2})-\p^1(\F_q)$.  
	The stabilizer of $\underline{z}$ is isomorphic to $(\F_{q^2}^\times)^1$, so its orbit has length $q(q^2-1)/(q+1)=q(q-1)$. Thus, the 
	number of orbits of elements of type (b) is $2^{s-1}$. This proves (b). 
	
	Now suppose the $z_i$'s are either of type (a) or (c) of Proposition \ref{prop:OrbitsofG1}, with at least one $z_i$ being of type (c). 
		Because of Lemma \ref{lem:Lem2.6GN} (1) and (2), the distinguished element $e\in \p^1(\F_q)$ must be the same for all $i$:  
		if $e=(c:1)$ then  $z_i=(u_i:v_i)$ and $(u_i-c)^2=0$ for all $i$; if $e=(1:0)$, then $z_i=(u_i:v_i)$  and $v_i^2=0$ for all $i$. 
		The number of such elements is $$(q+1)\prod_{i=1}^s(\abs{\fp_i}^{\lfloor r_i/2\rfloor}-1), $$ where $(q+1)$ corresponds to 
		the choice of $e\in \p^1(\F_q)$ and the factors $(\abs{\fp_i}^{\lfloor r_i/2\rfloor}-1)$ result from the calculations leading to 
		Proposition \ref{prop:OrbitsofG1} (they are the number of  solutions of $(u_i-c)^2=0$ and $v_i^2=0$ in $A/\fp_i^{r_i}$). The 
		stabilizer of each element has order $2q$, so the length of the orbit of such element is $\# G/2q=(q^2-1)/2$. Thus the number 
		of orbits of this type is $\frac{2}{q-1}\prod_{i=1}^s (\abs{\fp_i}^{\lfloor r_i/2\rfloor}-1)$. This proves (c). 
\end{proof}

\begin{prop}\label{propOrbitsofB-genN} The $G$-orbits of Proposition \ref{prop:OrbitsofG-genN} split as follows into $B$-orbits: 
	\begin{itemize}
		\item[(a)] Two $B$-orbits. 
		\item[(a*)] Four $B$-orbits. 
		\item[(b)] Two $B$-orbits. 
		\item[(c)] Two $B$-orbits. 
		\item[(d)] $(q+1)$ $B$-orbits. 
	\end{itemize}
\end{prop}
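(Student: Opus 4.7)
The plan is to handle each case of Proposition \ref{prop:OrbitsofG-genN} by computing $\Stab_B(\underline{z}')=B\cap \Stab_G(\underline{z}')$ for each $\underline{z}'$ in the $G$-orbit, using the orbit--stabilizer relation $|B\cdot \underline{z}'|=|B|/|\Stab_B(\underline{z}')|$, and checking that the $B$-orbit lengths sum to the known $G$-orbit length. Because the stabilizers $\Stab_G(\underline{z}')$ for $\underline{z}'$ in one $G$-orbit are only conjugate, different conjugates may or may not lie in $B$, so in general several $B$-orbits of distinct lengths appear inside one $G$-orbit. The intersection properties catalogued in Lemma \ref{lem:Lem2.6GN} are the tool that decides the sizes.

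For cases (b) and (d) the analysis is quick: in (b) every conjugate of the non-split torus $(\F_{q^2}^\times)^1$ meets $B$ in $\{\pm 1\}$ by Lemma \ref{lem:Lem2.6GN}(4), so every $B$-orbit has the same length $q(q-1)/2$, yielding $q(q-1)/(q(q-1)/2)=2$; in (d) the $G$-stabilizer is already $\{\pm 1\}$, so again all $B$-orbits have length $q(q-1)/2$ and there are $(q(q^2-1)/2)/(q(q-1)/2)=q+1$ of them. For (a), the points $\underline{z}'$ are simply the diagonal embedding of $\p^1(\F_q)$, and Proposition \ref{prop:orbitsB}(a) identifies the two $B$-orbits $\{(1{:}0)\}$ and $\{(c{:}1)\mid c\in \F_q\}$.

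Case (c) requires a dichotomy. Each $\underline{z}'$ of type (c) has a distinguished point $y'\in \p^1(\F_q)$, and the assignment $\underline{z}'\mapsto y'$ is $G$-equivariant. When $y'=(1{:}0)$, $\Stab_G(\underline{z}')$ equals the subgroup $\{\pm U\}\subset B$, so $\Stab_B(\underline{z}')=\{\pm U\}$ has order $2q$ and $|B\cdot \underline{z}'|=(q-1)/2$; the $(q-1)/2$ elements of the fiber above $(1{:}0)$ form one $B$-orbit. When $y'\ne (1{:}0)$, $\Stab_G(\underline{z}')$ is a conjugate of $\{\pm U\}$ not contained in $B$, and Lemma \ref{lem:Lem2.6GN}(1) forces $\Stab_B(\underline{z}')=\{\pm 1\}$, so the remaining $q(q-1)/2$ elements form a single $B$-orbit of that length. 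Summing gives two orbits.

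Case (a*) is the most subtle, and is where I expect the real work to be. A point $\underline{z}'$ of type (a*) is specified by the ordered pair $(u,v)\in \p^1(\F_q)^2$ with $u\ne v$ that gives its values on $S$ and $S'$, and $B$ acts diagonally, so the $B$-orbits on the $G$-orbit correspond to $B$-orbits on the set of such ordered pairs. Pairs of the form $((1{:}0),(c{:}1))$ and $((c{:}1),(1{:}0))$ give two orbits of length $q$ each (since $B$ fixes $(1{:}0)$ and acts transitively on the affine line). For pairs of the form $((c_1{:}1),(c_2{:}1))$ with $c_1\ne c_2\in \F_q$, the calculation
$$
\begin{pmatrix}a & b\\ 0 & a^{-1}\end{pmatrix}\cdot \bigl((c_1{:}1),(c_2{:}1)\bigr)=\bigl((a^2 c_1+ab{:}1),(a^2 c_2+ab{:}1)\bigr)
$$
shows that the $B$-orbit is determined by the class of $c_1-c_2$ in $\F_q^\times/(\F_q^\times)^2$; since $q$ is odd, this gives two further orbits of length $q(q-1)/2$ each, for a total of four. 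The main obstacle is this last splitting: it is the one place where the standing hypothesis that $q$ is odd plays a crucial combinatorial role, contrasting with the analogous $\GL_2$ computation in \cite{GN}.
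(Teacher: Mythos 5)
Your proposal is correct and follows essentially the same route as the paper: decompose each $G$-orbit by computing $\Stab_B(\underline{z})=B\cap\Stab_G(\underline{z})$ via the intersection properties of Lemma \ref{lem:Lem2.6GN} and then count with the orbit--stabilizer relation. The only local differences are cosmetic: in case (a*) you exhibit the square class of $c_1-c_2$ in $\F_q^\times/(\F_q^\times)^2$ as an explicit invariant separating the two orbits of length $q(q-1)/2$, where the paper instead notes (via Lemma \ref{lem:Lem2.6GN}(3)) that the relevant stabilizer is $\{\pm 1\}$ and concludes by the count $q+q+2\cdot\frac{q(q-1)}{2}=q(q+1)$; and in cases (b) and (c) you argue through Lemma \ref{lem:Lem2.6GN}(4) and (1), respectively, where the paper simply reuses the explicit computations of Proposition \ref{prop:orbitsB}.
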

\begin{proof}
	(a) The $G$-orbit $\{\underline{z}\mid z_1=\cdots=z_s\in \p^1(\F_q)\}$ under the diagonal action of $B$ splits into 
	$\{\underline{z}\mid z_1=\cdots=z_s=(u:1)\in \p^1(\F_q)\}$  and $\{\underline{z}\mid z_1=\cdots=z_s=(1:0)\}$.  
	
	(a*) The $G$-orbit associated with the partition $S\cup S'=\{1, \dots, s\}$ splits into
	\begin{itemize}
		\item one $B$-orbit $\{\underline{z}\mid z_i=(1:0) \text{ for } i \in S, z_i\neq (1:0) \text{ for } i \in S'\}$ of length $q$, 
		\item one $B$-orbit $\{\underline{z}\mid z_i=(1:0) \text{ for } i \in S', z_i\neq (1:0) \text{ for } i \in S\}$ of length $q$, 
		\item two $B$-orbits $\{\underline{z}\mid z_i\neq (1:0) \text{ for all }i\in S\cup S'\}$, each of length $q(q-1)/2$. 
	\end{itemize}
	The first two cases are clear. For the third case note that for $\underline{z}$ for which no $z_i$ 
	is equal to $(1:0)$, we have  $\Stab_B(\underline{z})=B\cap \Stab_G((u:1))\cap \Stab_G((v:1))$, $u\neq v$. 
	This is an intersection of three distinct Borel subgroups of $G$, so by Lemma \ref{lem:Lem2.6GN} (3) it is equal to $Z=\{\pm 1\}$. Thus, the length 
	of the $B$-orbit of $\underline{z}$ is $q(q-1)/2$. Since the length of the $G$-orbit is $q(q+1)$ and $q+q+2\frac{q(q-1)}{2}=q(q+1)$, 
	we see that there are two $B$-orbits of this last type. 
	
	(b) This follows from the same argument as the proof of Proposition \ref{prop:orbitsB} (b). 
	
	(c) Similar to the proof of Proposition \ref{prop:orbitsB} (c), the $G$-orbit has length $(q^2-1)/2$ and decomposes into 
	two $B$-orbits given by $\{\underline{z}\mid h(z_i)\geq 1 \text{ for all }i\}$ of length $(q-1)/2$ and 
	$\{\underline{z}\mid h(z_i)=0 \text{ for all }i\}$ of length $q(q-1)/2$ .
\end{proof}

Let $\underline{h}=(h_1, \dots, h_s)$, $0\leq h_i\leq r_i$ for $i=1,2, \dots, s$, and 
$$
\p^1(\underline{h})=\{\underline{z}\in \p^1(A/\fn)\mid h(z_i)=h_i\}. 
$$
Note that $\p^1(\underline{h})$ is stable under the action of $G_{d-1}$, $d=\deg(\fn)$. 

\begin{prop}\label{propNumCuspsN} $\p^1(\underline{h})$ splits as follows under $G_{d-1}$:
	\begin{itemize}
		\item[(a)] Each $h_i=0$ or $r_i$; one orbit; 
		\item[(b)] There exists $i$ such that $0<h_i<r_i$; the number of orbits is 
		$$
		\frac{2}{q-1} \prod_{\substack{1\leq i\leq s\\ 0<h_i<r_i}}\abs{\fp_i}^{m_i-1}(\abs{\fp_i}-1)\quad \text{with }m_i=\min\{h_i, r_i-h_i\}. 
		$$
	\end{itemize}
\end{prop}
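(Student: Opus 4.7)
The plan is to reduce to the prime-power case (Proposition \ref{propCusps-h}) via the CRT decomposition \eqref{eqP1decom}, which is $\SL_2(A)$-equivariant and hence $G_{d-1}$-equivariant. First I would observe that an element $\gamma=\begin{pmatrix} a & b \\ 0 & a^{-1}\end{pmatrix}\in G_{d-1}$ is determined by $a\in \F_q^\times$ together with $b\in A$ of degree $\leq d-1$, and by CRT the latter corresponds bijectively to a tuple $(b_1,\dots,b_s)$ with $b_i\in A/\fp_i^{r_i}$. Thus the action of $G_{d-1}$ on $\p^1(A/\fn)$ factors through its diagonal image in $\prod_i G_{d_i-1}^{(i)}$, where $d_i=r_i\deg(\fp_i)$ and $G_{d_i-1}^{(i)}$ is the analogous Borel-type group acting on $\p^1(A/\fp_i^{r_i})$, with the $a$-entry shared across all factors.

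For case (a), where each $h_i\in\{0,r_i\}$, I would show all such points form a single $G_{d-1}$-orbit. Components with $h_i=r_i$ give $z_i=(1:0)$, which is fixed by every element of $G_{d-1}$. Components with $h_i=0$ give $z_i=(u_i:1)$, and the unipotent element $\begin{pmatrix} 1 & b \\ 0 & 1\end{pmatrix}$ acts as $u_i\mapsto u_i+b\pmod{\fp_i^{r_i}}$. By CRT one may choose $b\in A$ of degree $\leq d-1$ realizing any prescribed tuple of residues at the primes with $h_i=0$, so any two points of type (a) lie in the same $G_{d-1}$-orbit.

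For case (b), I would compute $\Stab_{G_{d-1}}(\underline{z})$ using \eqref{eqStabG} and the congruence conditions extracted from the proof of Proposition \ref{propCusps-h}. The key observation is that the presence of at least one index with $0<h_i<r_i$ forces $a=\pm 1$; once $a^2=1$, the condition at an index with $h_i=0$ collapses to $b\equiv 0\pmod{\fp_i^{r_i}}$, while indices with $h_i=r_i$ impose no constraint on $b$. By CRT the stabilizer then has size $2\prod_i c_i$, where $c_i=1$ if $h_i=0$, $c_i=\abs{\fp_i}^{r_i}$ if $h_i=r_i$, and $c_i=\abs{\fp_i}^{r_i-\max(r_i-2h_i,\,0)}$ if $0<h_i<r_i$. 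Combining $\#G_{d-1}=(q-1)\abs{\fn}$ with $\#\p^1(\underline{h})=\prod_i \#\p^1(A/\fp_i^{r_i})(h_i)$ from \eqref{eqOrdPh}, the number of orbits equals $\#\p^1(\underline{h})\cdot\#\Stab_{G_{d-1}}(\underline{z})/\#G_{d-1}$; I expect the factors contributed by indices with $h_i\in\{0,r_i\}$ to telescope to $1$, while for indices with $0<h_i<r_i$ each factor simplifies---after splitting into the subcases $h_i\leq r_i/2$ and $h_i>r_i/2$, in which $m_i=h_i$ and $m_i=r_i-h_i$ respectively---to $\abs{\fp_i}^{m_i-1}(\abs{\fp_i}-1)$, yielding the stated formula.

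The main obstacle is the bookkeeping in case (b) across the two regimes $h_i\leq r_i/2$ and $h_i>r_i/2$, together with verifying that indices where $h_i\in\{0,r_i\}$ contribute trivially to the orbit count; these are routine but demand care. The rest is a formal consequence of CRT and the prime-power analysis already carried out in Proposition \ref{propCusps-h}.
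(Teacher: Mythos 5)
Your proposal is correct and follows essentially the same route as the paper: decompose the $G_{d-1}$-action via CRT into $\F_q^\times$ acting diagonally times the product of the unipotent groups $U(A/\fp_i^{r_i})$, get (a) from the unipotents fixing $(1:0)$ and acting transitively on height-$0$ points, and get (b) by computing $\#\Stab_{G_{d-1}}(\underline{z})=2\prod_i\abs{\fp_i}^{\min\{2h_i,r_i\}}$ from the stabilizer analysis in Proposition \ref{propCusps-h} and dividing $\#\p^1(\underline{h})$ by the common orbit length. Your bookkeeping (the factors at $h_i\in\{0,r_i\}$ cancel, and each $0<h_i<r_i$ contributes $\abs{\fp_i}^{m_i-1}(\abs{\fp_i}-1)$ in both regimes) checks out and reproduces the stated formula.
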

\begin{proof} With respect to the factorization $\p^1(A/\fn)=\prod_{i=1}^s\p^1(A/\fp_i^{r_i})$,  the group $G_{d-1}$ acts on $\p^1(A/\fn)$ as  
$$\F_q^\times \prod_{i=1}^s U(A/\fp_i^{r_i}), 
	$$ 
	where $\F_q^\times\cong \left\{\begin{pmatrix} a & 0 \\ 0 & a^{-1}\end{pmatrix}\mid a\in \F_q^\times\right\}$ acts 
	diagonally and $$U_i\colonequals U(A/\fp_i^{r_i}) =  \left\{\begin{pmatrix} 1 & b \\ 0 & 1\end{pmatrix}\mid b\in A/\fp_i^{r_i}\right\}. $$
	
	The group $U_i\colonequals U(A/\fp_i^{r_i})$ 
	fixes the point $(1:0)\in \p^1(A/\fp_i^{r_i})$, which has height $r_i$, and permutes transitively the points $(u:1)$, $u\in A/\fp_i^{r_i}$, which 
	have height $0$.  This implies (a). 
	
	Next, if $0< h_i<r_i$, from the proof of Proposition \ref{propCusps-h} it follows that $\# \Stab_{U_i}(z_i) =\abs{\fp}^{\min\{2h_i, r_i\}}$. Thus, 
	$$
	\#\Stab_{G_{d-1}}(\underline{z}) = 2 \prod_{i=1}^s \abs{\fp}^{\min\{2h_i, r_i\}},
	$$
	where $2$ comes from $\{\pm 1\}\in \F_q^\times$. Hence the orbit of $\underline{z}$ has length 
	$$
	\#G_d/\# \Stab_{G_{d-1}}(\underline{z}) =\frac{q-1}{2}\prod_{i=1}^s \abs{\fp}^{r_i-\min\{2h_i, r_i\}}
	$$
On the other hand, by \eqref{eqOrdPh},  
$$
\# \p^1(\underline{h}) = \prod_{\substack{1\leq i\leq s\\ h_i=0 \text{ or }r_i}}\abs{\fp_i}^{r_i-h_i} 
\prod_{\substack{1\leq i\leq s\\ 0<h_i < r_i}}(\abs{\fp_i}-1)\abs{\fp_i}^{r_i-h_i-1}. 
$$
Dividing $\# \p^1(\underline{h})$ by  $	\#G_d/\# \Stab_{G_{d-1}}(\underline{z})$ we get the claimed formula for the number of orbits. 
\end{proof}

\begin{defn}
	The cusps corresponding to $G_{d-1}$-orbits of type (a) above are called \textit{regular}; there are $2^s$ of them. 
	The cusps of type (b) are called irregular. 
\end{defn}

Comparing the formulas in Proposition \ref{propNumCuspsN} with the corresponding formulas in \cite[Prop. 2.14]{GN} for $\G_0(\fn)\bs \cT$ 
(note that there is a typo in \textit{loc. cit.}), 
one concludes that  in the graph covering $$\G_0^1(\fp^r)\bs \cT \To \G_0(\fp^r)\bs \cT$$ all regular cusps ramify and all irregular cusps do not ramify.  
Here ``ramifies" (resp. ``does not ramify") means that the preimage of a given half-line in $\G_0(\fp^r)\bs \cT$ 
consists of one (resp. two) half-lines in $\G_0^1(\fp^r)\bs \cT$ (see the appendix for some explicit examples). 

\begin{cor}\label{corNumCusps-GenN}
	The number of cusps of $\G_0^1(\fp^r)\bs \cT$ is 
	$$
	2^s+ \frac{2}{q-1}(\kappa(\fn)-2^s), 
	$$
	where 
	$$
	\kappa(\fn)=\prod_{1\leq i\leq s} \left(\abs{\fp_i}^{\lfloor (r_i-1)/2\rfloor}+ \abs{\fp_i}^{\lfloor r_i/2\rfloor}\right). 
	$$
\end{cor}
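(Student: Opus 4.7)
The plan is to derive the formula by summing the orbit counts from Proposition \ref{propNumCuspsN} over all height vectors $\underline{h}=(h_1,\dots,h_s)$ with $0\leq h_i\leq r_i$. I would first observe that the regular cusps (type (a)) contribute exactly $2^s$, since each $h_i$ must independently be chosen from $\{0, r_i\}$. For the irregular cusps (type (b)), the sum I need is
$$
\mathcal{S} \colonequals \sum_{\substack{\underline{h}\\ \exists\, i:\, 0<h_i<r_i}} \;\prod_{\substack{1\leq i\leq s\\ 0<h_i<r_i}} \abs{\fp_i}^{m_i-1}(\abs{\fp_i}-1),
$$
and the total number of cusps will be $2^s + \tfrac{2}{q-1}\mathcal{S}$.

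Next I would enlarge the sum to run over all $\underline{h}$ rather than just type (b), adopting the convention that the empty product equals $1$. Then the type (a) vectors contribute exactly $2^s$ to the enlarged sum, so $\mathcal{S}$ equals the full sum minus $2^s$. The full sum factors as
$$
\sum_{\underline{h}} \;\prod_{0<h_i<r_i} \abs{\fp_i}^{m_i-1}(\abs{\fp_i}-1) \;=\; \prod_{i=1}^s \left( 2 + \sum_{h_i=1}^{r_i-1} \abs{\fp_i}^{m_i-1}(\abs{\fp_i}-1) \right),
$$
since for each index the two boundary values $h_i\in\{0,r_i\}$ contribute $1$ each (giving the leading $2$) while the interior values $1\leq h_i\leq r_i-1$ contribute the stated factor.

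The heart of the calculation is showing that for each prime $\fp_i$ with exponent $r_i$,
$$
2 + \sum_{h_i=1}^{r_i-1} \abs{\fp_i}^{m_i-1}(\abs{\fp_i}-1) \;=\; \abs{\fp_i}^{\lfloor r_i/2\rfloor} + \abs{\fp_i}^{\lfloor (r_i-1)/2\rfloor}.
$$
Since $m_i=\min\{h_i,\,r_i-h_i\}$, the values of $m_i$ across $1\leq h_i\leq r_i-1$ are symmetric about $h_i=r_i/2$, so I would split into the parity cases $r_i=2k$ and $r_i=2k+1$ and telescope the resulting geometric series $\sum (\abs{\fp_i}^{j}-\abs{\fp_i}^{j-1})$. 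In the even case this yields $\abs{\fp_i}^k+\abs{\fp_i}^{k-1}-2$, and adding the $2$ gives $\abs{\fp_i}^k+\abs{\fp_i}^{k-1}$; in the odd case one gets $2(\abs{\fp_i}^k-1)$, which plus $2$ equals $2\abs{\fp_i}^k$. Both agree with the desired $\abs{\fp_i}^{\lfloor r_i/2\rfloor}+\abs{\fp_i}^{\lfloor (r_i-1)/2\rfloor}$.

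Taking the product over $i$ identifies the enlarged sum with $\kappa(\fn)$, so $\mathcal{S}=\kappa(\fn)-2^s$ and the total number of cusps is $2^s+\tfrac{2}{q-1}(\kappa(\fn)-2^s)$, as claimed. The only step that is more than bookkeeping is the parity case-split for the interior sum, but each side is a short finite computation, so no real obstacle arises.
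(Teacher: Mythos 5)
Your proposal is correct and follows essentially the same route as the paper: the paper's proof also sums the orbit counts of Proposition \ref{propNumCuspsN} over all height vectors, merely deferring the resulting arithmetic to the analogous calculation in \cite[Lem. 2.16]{GN}, which is exactly the factorization and parity case-split you carry out explicitly. Your per-prime identity $2+\sum_{h=1}^{r-1}\abs{\fp}^{\min\{h,r-h\}-1}(\abs{\fp}-1)=\abs{\fp}^{\lfloor r/2\rfloor}+\abs{\fp}^{\lfloor (r-1)/2\rfloor}$ checks out in both parities, so the argument is complete.
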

\begin{proof} One needs to add the numbers of cusps of various heights $\underline{h}$. With the help of 
	Proposition \ref{propNumCuspsN} this reduces to the same calculation as in the proof of Lemma 2.16 in \cite{GN}. 
\end{proof}

Combining Proposition \ref{prop:OrbitsofG-genN}, Proposition \ref{propOrbitsofB-genN}, Corollary \ref{corNumCusps-GenN}  and 
\eqref{eqGenus}, we get 
$$
\boxed{
g(\G_0^1(\fn)\bs \cT ) = 1 +\frac{2}{q^2-1} \epsilon(\fn) - \frac{2}{q-1}\kappa(\fn)+2^{s-1}\left( \frac{3-q}{q-1}+\frac{1-q}{q+1}\me(\fn)\right)}
$$
Comparing the above formula with the formula for $g(\G_0^1(\fn)\bs \cT )$ given in \cite[Thm. 2.17]{GN}, gives the simple relationship 
\begin{equation}\label{eqGenRH}
g\left(\G_0^1(\fn)\bs \cT\right) =2 \cdot g\left(\G_0(\fn)\bs \cT\right)-1+ 2^{s-1}(1+\me(\fn)). 
\end{equation}

\begin{rem}
	Comparing \eqref{eqGenRH} with the Riemann-Hurwitz formula, we recognize the term $2^s(1+\me(\fn))$ as the ramification 
	of the covering $X_0^1(\fn)\to X_0(\fn)$. We know that the regular cusps contribute $2^s$ to the ramification, and the other cusps do not ramify. 
	Hence $2^s$ elliptic points of $X_0(\fn)$ ramify in $X_0^1(\fn)\to X_0(\fn)$ if $\me(\fn)=1$, and otherwise only the regular cusps ramify. 
	The ramified elliptic points of $X_0(\fn)$ are the ones that are unramified over $X_0(1)$. 
\end{rem}

\begin{cor}\label{corg=1} We have 
	\begin{itemize}
	\item $g\left(\G_0^1(\fn)\bs \cT\right)=0$ if and only if either $\fn$ is a prime of degree $1$ or $\fn$ is a square of such prime. 
	  \item $g\left(\G_0^1(\fn)\bs \cT\right)=1$ if and only if either $\fn$ is a product of two distinct primes of degree $1$, or $\fn$ is a prime of degree $2$. 
	\end{itemize}
\end{cor}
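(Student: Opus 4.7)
The plan is to exploit the identity \eqref{eqGenRH}, namely
$$
g(\G_0^1(\fn)\bs\cT) \;=\; 2\,g(\G_0(\fn)\bs\cT) \;-\; 1 \;+\; 2^{s-1}(1+\me(\fn)).
$$
Since $g(\G_0(\fn)\bs\cT)\in\Z_{\geq 0}$ and $2^{s-1}(1+\me(\fn))\geq 1$, comparing integer values forces: $g(\G_0^1(\fn)\bs\cT)=0$ only if $g(\G_0(\fn)\bs\cT)=0$ together with $2^{s-1}(1+\me(\fn))=1$, i.e.\ $s=1$ and $\me(\fn)=0$; and $g(\G_0^1(\fn)\bs\cT)=1$ only if $g(\G_0(\fn)\bs\cT)=0$ together with $2^{s-1}(1+\me(\fn))=2$, i.e.\ either $s=1$ and $\me(\fn)=1$, or $s=2$ and $\me(\fn)=0$.

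This reduces the problem to enumerating $\fn$ with $g(\G_0(\fn)\bs\cT)=0$ that in addition satisfy the stated parity conditions on $s$ and $\me(\fn)$. One can either invoke the classification in \cite[Cor.~2.20]{GN} of all $\fn$ for which $g(\G_0(\fn)\bs\cT)=0$, or simply substitute into the boxed explicit formula for $g(\G_0^1(\fn)\bs\cT)$ in each of the three regimes above. In the regime $s=1,\me(\fn)=0$, where $\fn=\fp^r$ with $\deg(\fp)$ odd, direct evaluation using $\epsilon(\fp^r)=|\fp|^{r-1}(|\fp|+1)$ and $\kappa(\fp^r)=|\fp|^{\lfloor r/2\rfloor}+|\fp|^{\lfloor(r-1)/2\rfloor}$ shows $g(\G_0^1(\fp^r)\bs\cT)=0$ exactly when $\deg(\fp)=1$ and $r\in\{1,2\}$. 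In the regime $s=1,\me(\fn)=1$, where $\fn=\fp^r$ with $\deg(\fp)$ even, the same formula singles out $r=1,\deg(\fp)=2$. In the regime $s=2,\me(\fn)=0$, where $\fn=\fp_1^{r_1}\fp_2^{r_2}$ with at least one odd-degree factor, a short computation pins down $r_1=r_2=1$ and $\deg(\fp_1)=\deg(\fp_2)=1$.

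The main obstacle is the finiteness step: one must verify that $g(\G_0^1(\fn)\bs\cT)\geq 2$ as soon as $\fn$ leaves the short exceptional list. This is a monotonicity/growth estimate on the boxed formula: the positive contribution $\tfrac{2}{q^2-1}\epsilon(\fn)$ grows multiplicatively with each added prime power factor of $\fn$, while the subtractive term $\tfrac{2}{q-1}\kappa(\fn)$ grows only as $\prod_i(|\fp_i|^{\lfloor r_i/2\rfloor}+|\fp_i|^{\lfloor(r_i-1)/2\rfloor})$. I would organize the casework by $\deg(\fn)$ and $s$, using the elementary bound
$$
\epsilon(\fn) \;\geq\; (q+1)\,\kappa(\fn)^{\,2}/2^{s-1}
\quad\text{once }\deg(\fn)\geq 3,
$$
to conclude $g\geq 2$ outside the verified exceptional list. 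Sample checks (e.g.\ $\fn=\fp^3$ with $\deg(\fp)=1$ gives $g=2q$, and $\fn=\fp\fq$ with $\deg(\fp)=1,\deg(\fq)=2$ gives $g=2q+1$) confirm that the threshold is sharp, completing both halves of the corollary.
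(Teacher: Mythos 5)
Your route is essentially the paper's own: the corollary is meant to be read off directly from the boxed genus formula, or equivalently from \eqref{eqGenRH}, and your opening reduction is exactly right — since $g(\G_0(\fn)\bs\cT)\ge 0$ and $2^{s-1}(1+\me(\fn))\ge 1$, the value $g(\G_0^1(\fn)\bs\cT)\in\{0,1\}$ forces $g(\G_0(\fn)\bs\cT)=0$ and pins down $(s,\me(\fn))$; your four direct substitutions (degree-one prime, its square, a degree-two prime, a product of two distinct degree-one primes) are all correct. Note also that once $g(\G_0(\fn)\bs\cT)=0$ is known, \eqref{eqGenRH} already returns the value $0$ or $1$ in the respective regimes, so the only remaining input is that (for odd $q$) $g(\G_0(\fn)\bs\cT)=0$ happens exactly when $\deg(\fn)\le 2$, which is immediate from \cite[Thm.~2.17]{GN}; a separate growth estimate on the boxed formula is not really needed.

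The one genuine defect is in the ``finiteness step'' as you wrote it: the inequality $\epsilon(\fn)\ge (q+1)\kappa(\fn)^2/2^{s-1}$ for $\deg(\fn)\ge 3$ is false. For $\fn=\fp^3$ with $\deg(\fp)=1$ one has $\epsilon(\fn)=q^2(q+1)$ and $\kappa(\fn)=2q$, so the right-hand side is $4q^2(q+1)>\epsilon(\fn)$; in the same example your sample value is also off, the genus being $2q-2$ rather than $2q$ (still $\ge 2$, so the conclusion survives). The repair is easy and stays within your framework: either quote the genus-$0$ classification for $\G_0(\fn)\bs\cT$ as above, or argue regime by regime. For instance, for $s=1$, $\me(\fn)=0$, $\fn=\fp^r$, the boxed formula collapses to $g(\G_0^1(\fp^r)\bs\cT)=\frac{2}{q-1}\left(\frac{\epsilon(\fp^r)}{q+1}-\kappa(\fp^r)+1\right)$, and the inequality $\abs{\fp}^{r-1}\frac{\abs{\fp}+1}{q+1}+1>\abs{\fp}^{\lfloor r/2\rfloor}+\abs{\fp}^{\lfloor (r-1)/2\rfloor}$ holds whenever $(\deg\fp,r)\notin\{(1,1),(1,2)\}$, because $r-1>\lfloor r/2\rfloor$ once $r\ge 3$ and $\frac{\abs{\fp}+1}{q+1}>1$ once $\deg(\fp)\ge 3$; the regimes $(s,\me(\fn))=(1,1)$ and $(2,0)$ are handled by the same kind of one-line comparison. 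With that correction your argument is complete and coincides with the paper's intended proof.
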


See the appendix for pictures of the quotient graphs in the corollary.   


\section{$X_0^1(\fn)$ of genus $1$: equations}\label{sEquations}
In this section, for the cases when $X_0^1(\fn)$ has genus $1$ (see Corollary \ref{corg=1}), we deduce 
the defining  Weierstrass equations. The idea of our approach is fairly simple: 
\begin{enumerate}
	\item In the two indicated cases, $X_0(\fn)$ has genus $0$. First, we find its ``Hauptmodul", i.e., a generator $z$ of the function field $\cF_{0, \fn}$ of $X_0(\fn)$. 
	\item We express the $j$-function as a rational function of $z$, $j=a(z)/b(z)$, where $a(z), b(z)\in \Ci[z]$. 
	\item Finally, by Lemma \ref{lemFFX0}, the function field $\cF_{1, \fn}$ of $X_0^1(\fn)$ is obtained from $\cF_{0, \fn}$ by adjoining the square root of $j$. 
	If we identity $\cF_{0, \fn}=\Ci(z)$,  then 
	$$\cF_{1, \fn}=\Ci(z)[\sqrt{a(z)/b(z)}] = \Ci(x)[\sqrt{a(z)\cdot b(z)}]. 
	$$ 
	Because $X_0^1(\fn)$ has genus $1$, the polynomial $a(z)b(z)$, up to a square, is a monic square-free polynomial $f(z)$ of degree $3$. The Weierstrass 
	equation of $X_0^1(\fn)$ is $y^2=f(z)$. 
\end{enumerate} 

Note that in the case when $\fn$ is a product of two distinct primes of degree $1$, by applying an automorphism of $F$, we may assume that $\fn=T(T+1)$. 
We treat the case when $\fn=T(T+1)$ and $\fn$ is irreducible of degree $2$ separately since the actual calculations in steps (1) and (2) above 
are quite different. 


\subsection{Equation of $X_0^1(T(T+1))$}\label{ssX01T(T+1)}
Let 
\begin{equation}\label{eqUDM}
\phi_T(x)=Tx+x^q+j^{-1}x^{q^2},
\end{equation}
be the ``universal" Drinfeld module of rank $2$ with $j$-invariant $j$, where we consider $j$ as a variable.  
A cyclic $T$-submodule $\cC$ of $\phi$ is the set of roots of an $\F_q$-linear polynomial of the form $f_\cC(x)=x+\alpha x^q$, where 
$\alpha$ is another variable 
(we may assume that the coefficient of $x$ in $f_\cC(x)$ is $1$ because the polynomial is separable and the set of zeros does not change 
if we multiply $f_\cC$ by a nonzero constant). Since $\cC\subset \phi[T]$, we have 
$$
\phi_T(x)= (Tx+\tilde{\alpha}x^q)\circ (x+\alpha x^q). 
$$
Thus, $j^{-1}=\tilde{\alpha}\alpha^q$ and $\alpha T+\tilde{\alpha}=1$. This leads to $j^{-1}=(1-\alpha T)\alpha^q$. Substituting $\alpha\mapsto -1/\alpha$, we obtain 
$$
T+\alpha+j^{-1}\alpha^{q+1} = 0. 
$$
Note that $\cC$ is automatically a $\phi(A)$-module, i.e., $f_\cC\circ \phi_T=g\circ f_\cC$ for some $g\in \atwist{\Ci}{x}$, since any root of $f_\cC(x)$ 
maps to $0$ under the action of $\phi_T$. Thus, $X_0(T)$ is defined by 
$$
j=-\frac{\alpha^{q+1}}{\alpha+T}. 
$$ 
Similarly, $X_0(T+1)$ is defined by 
$j=-\beta^{q+1}/(\beta+(T+1))$ for another variable $\beta$. Since a cyclic $T(T+1)$-submodule of $\phi_T$ decomposes into a direct product of cyclic $T$ and $T+1$ submodules, we 
conclude that $\cF_{0, T(T+1)}=\Ci(j, \alpha, \beta)=\Ci(\alpha, \beta)$. Changing the notation for the  variables  $\alpha$ and $\beta$ to the 
more conventional $x$ and $y$, we obtain the following as an equation of the curve $X_0(T(T+1))$ in the affine plane $\Spec(\Ci[x, y])$:
\begin{equation}\label{eqofX_0(T(T+1))}
	\frac{x^{q+1}}{T+x} = 	\frac{y^{q+1}}{(T+1)+y}.  
\end{equation}

Let $z$ be a generator of the function field of $\p^1_{\Ci}$. Put $S\colonequals 1/(T^q+1)=1/(T+1)^q$. Then, as one checks by a straightforward calculation, 
$$
z\longmapsto (x(z), y(z))=\left(\frac{-T(z^{q+1}+z^q+S^{q-1}z+S^q)}{z^{q+1}+Sz^q+S^{q-1}z+S^q}, 
\frac{-(T+1)(z^{q+1}+z^q+S^{q-1}z+S^q)}{z^{q+1}+S^{q-1}z}\right), 
$$
defines a morphism $\p^1_{\Ci}\to X_0(T(T+1))$, i.e., $x(z)$ and $y(z)$ satisfy \eqref{eqofX_0(T(T+1))}. Since this morphism 
has degree $1$, it is an isomorphism. Write 
$$
j(z)= -\frac{x(z)^{q+1}}{T+x(z)} =\frac{a(z)}{b(z)},
$$
where $a(z), b(z)\in \Ci[z]$. One checks by a tedious calculation that 
\begin{align}\label{eq-azbz}
a(z)b(z) = &(T^q+1) z^q\left(z+ \frac{1}{(T+1)^q}\right)^q \left(z+ \frac{1}{(T+1)^{q-1}}\right)^{q^2} \\ 
\nonumber & \times \left(z^{q+1}+z^q+S^{q-1}z+S^q\right)^{q+1}. 
\end{align}
As was explained at the beginning of this section, extracting the square-free part from the above polynomial, 
we obtain a Weierstrass equation for $X_0^1(T(T+1))$: 
\begin{equation}\label{eqX(T(T+1))}
y^2=x\left(x+ \frac{1}{(T+1)^q}\right) \left(x+ \frac{1}{(T+1)^{q-1}}\right).
\end{equation}
(Here we again changed the notation for the variable to $z$ to the more conventional $x$.)
	
	The elliptic curve given by \eqref{eqX(T(T+1))} is visibly defined over $F$. Thus, the modular curve $X_0^1(T(T+1))$ 
	has a model over $F$, but the model \eqref{eqX(T(T+1))} is not ``canonical" in a modular sense.  
	In fact, throughout this subsection we could have worked over $F$, since  \eqref{eqofX_0(T(T+1))} and the explicit parametrization $\p^1_F\to X_0(T(T+1))$ 
	are defined over $F$. Extracting the square-free part of \eqref{eq-azbz} over $F$, we end up with 
	$y^2=(T+1)x(x+1/(T+1)^q)(x+1/(T+1)^{q-1})$. So, substituting $y\mapsto y/(T+1)$ and $x\mapsto x/(T+1)$, we get the following Weierstrass 
	equation for $X_0^1(T(T+1))$: 
	\begin{equation}\label{eqX(T(T+1))-2}
		\boxed{y^2=x\left(x+ \frac{1}{(T+1)^{q-1}}\right) \left(x+ \frac{1}{(T+1)^{q-2}}\right)}
\end{equation}
	There are some differences between the models \eqref{eqX(T(T+1))} and \eqref{eqX(T(T+1))-2} of $X_0^1(T(T+1))$. The elliptic 
	curve $E_1$ defined by  \eqref{eqX(T(T+1))}  has conductor $\fn_{E_1}=T\cdot (T+1)^2\cdot \infty$, 
	and the elliptic curve  $E_2$ defined by \eqref{eqX(T(T+1))-2} has conductor $\fn_{E_2}=T\cdot (T+1)\cdot \infty^2$. In particular, these 
	curves are not isomorphic over $F$. 
	It is known that the conductor of a non-isotrivial elliptic curve over $F$ must have degree $\geq 4$, so both $\fn_{E_i}$ are minimal in that sense. 
	Elliptic curves over $F$ with conductor of degree $4$ are called \textit{extremal}; such curves have special geometric properties; cf. \cite{Ito}. 

\subsection{Equation of $X_0^1(\fp)$} In this subsection, we assume that $\fp=T^2+aT+b\in \F_q[T]$ is irreducible of degree $2$.  
One can pursue the same strategy as in $\S$\ref{ssX01T(T+1)} to obtain an equation for $X_0(\fp)$, but this results in a system of 
polynomial equations rather than a convenient plane model. Instead we use modular forms to construct an explicit generator of $\cF_{0, \fp}$; 
this is partly motivated by \cite{YangAM}.  

Let $\Delta(z)$  be the Drinfeld discriminant function defined in $\S$\ref{ssDMF} and let $\Delta_\fp(z)\colonequals \Delta(\fp z)$. 
The quotient $\Delta/\Delta_\fp$ is invariant under the action of $\G_0(\fp)$. It was shown by Gekeler (see \cite[(3.18), (3.20), (3.22)]{GekelerCM97}) that there is a 
modular form $\eta(z)$ for $\G_0(\fp)$ of weight $0$ and type $0$ such that $\eta^{q^2-1}=\Delta/\Delta_\fp$. Moreover, from the proof of Corollary 3.23 in \cite{GekelerCM97}, 
the divisor of $\eta$ on $X_0(\fp)$ is $[0]-[\infty]$, where $[0]$ and $[\infty]$ are the cusps of $X_0(\fp)$. Thus, $\Ci(\eta)=\cF_{0, \fp}$. The 
problem now becomes expressing $j(z)$ as a rational function in $\eta(z)$.  

\begin{rem}
	For general irreducible $\fp$, the cuspidal divisor group $\cC(\fp)$ of $X_0(\fp)$ is cyclic of order $(\abs{\fp}-1)/(q^2-1)$ if $\deg(\fp)$ is even 
	and $(\abs{\fp}-1)/(q-1)$ if $\deg(\fp)$ is odd; see \cite[Cor. 3.23]{GekelerCM97}. The cuspidal divisor group of $X_0(\fp)$ is the subgroup 
	of its Jacobian generated by the differences of cusps. Since there are only two cusps in this case, the cuspidal divisor group is clearly cyclic. 
	The calculation of its order is closely related to the calculation of the largest root of $\Delta/\Delta_\fp$ in the group of modular units on $\Omega$. 
	Gekeler's result can be extended to $X_0^1(\fp)$ as follows: The cuspidal divisor group $\cC^1(\fp)$ of $X_0^1(\fp)$ is cyclic of order 
	$2 (\abs{\fp}-1)/(q^2-1)$ if $\deg(\fp)$ is even 
	and $(\abs{\fp}-1)/(q-1)$ if $\deg(\fp)$ is odd. The proof is similar to \textit{loc. cit.} 
	Again, $X_0^1(\fp)$ has only two cusps, which, by abuse of notation, 
	we denote $[0]$ and $[\infty]$, depending on which cusp of $X_0(\fp)$ each maps to under $X_0^1(\fp)\to X_0(\fp)$. 
	The divisor of $\Delta/\Delta_\fp$ on $X_0^1(\fp)$ is $2(\abs{\fp}-1)\cdot([0]-[\infty])$. It follows from (3.21) and (3.22) in \cite{GekelerCM97} 
	that the largest $n$ such that there is a holomorphic nonvanishing function $D_\fp$ on $\Omega$ such that $D_\fp^n=\Delta/\Delta_\fp$ 
	and $D_\fp$ is invariant under the action of $\G_0^1(\fp)$ is $q^2-1$ if $\deg(\fp)$ is even and $2(q-1)$ if $\deg(\fp)$ is odd. 
	This implies that the order of $\cC^1(\fp)$ is given by the formula above. 
\end{rem}

As a function on $X(1)$, $j(z)$ has a simple pole at the unique cusp $[\infty']$ of $X(1)$. In the natural covering $X_0(\fp)\to X(1)$, which has degree $q^2+1$, 
the cusp $[\infty]$ does not ramify over $[\infty']$ but the cusp $[0]$ ramifies with index $q^2$. Thus, as a function on $X_0(\fp)$, $j(z)$
has a pole of order $q^2$ at $[0]$, a simple pole at $[\infty]$, and no other poles. Since $\eta$ has a simple zero at $[0]$, and no other zeros, there 
is a polynomial $f_1(x)\in \Ci[x]$ of degree $\leq q^2$ such that $j - f_1(\eta)/\eta^{q^2}$ has a pole only at $[\infty]$ and a zero at $[0]$. Moreover, because 
$\eta$ has a simple pole at $[\infty]$, the pole of $j - f_1(\eta)/\eta^{q^2}$  at $[\infty]$ is the pole of $j$, so it is simple. Since $\eta$ 
also has a simple pole at $[\infty]$, there is a nonzero constant $c\in \Ci$ such that $j-f_1(\eta)/\eta^{q^2}-c\eta$ has no poles, so it is a constant. 
Because $\eta$ and  $j-f_1(\eta)/\eta^{q^2}$ have zeros at $[0]$, the constant above is zero. We conclude that there is a polynomial $f(x)\in \Ci[x]$ 
of degree $q^2+1$ such that  
$$
j=\frac{f(\eta)}{\eta^{q^2}}. 
$$ 
Once we find $f(x)$, this can serve as a defining equation of $X_0(\fp)$ if $j$ and $\eta$ are treated as indeterminates. Moreover, 
the square-free part of $f(x)$ must be a quadratic polynomial $f_2(x)$ with distinct roots and the equation of $X_0^1(\fp)$ will be $y^2=x\cdot f_2(x)$ 
by the argument outlined at the beginning of this section. To find $f$ we use the $t$-expansions of $j(z)$ and $\eta(z)$. 

Let $\pi_C$ be the Carlitz period, i.e., a generator of the lattice corresponding to the Carlitz module defined by $\rho_T=Tx+x^q$. Let  
$$
t(z)=\frac{1}{\pi_Ce_{A}(z)} = \pi_C^{-1}\sum_{a\in A} \frac{1}{z+a}.
$$ 
This function is the parameter at infinity of $\Omega$ analogous to the classical $\exp(2\pi i z)$. Define the \textit{$a$-th inverse cyclotomic polynomial} 
$$
\theta_a(x)=\rho_a(x^{-1}) x^{\abs{a}}. 
$$
Gekeler proved that (cf. \cite[Thm. 6.1]{Gekeler-Coeff})
\begin{equation}\label{eqDeltaProdExp}
\pi_C^{1-q^2} \Delta(z)= -t^{q-1}\prod_{a\in A \text{ monic}} \theta_a(t)^{(q^2-1)(q-1)} = \sum_{i\geq 1} \delta_i t^{(q-1)i},
\end{equation}
where the product converges for $\Im(z)$ sufficiently large. (Since $\GL_2(A)$ contains 
matrices of the form $\begin{pmatrix} \alpha & 0 \\ 0 & 1\end{pmatrix}$, $\alpha\in \F_q^\times$, modular transformation rule for $\Delta(z)$ 
implies that $\Delta(\alpha z)=\Delta(z)$, so the coefficient of $t^i$ in the $t$-expansion of $\Delta$ is zero if $(q-1)\nmid i$.)
Using \eqref{eqDeltaProdExp}, it is possible to compute the coefficients $\delta_i$, $i\leq N$, for any desired finite $N$. Already in \cite[p. 692]{Gekeler-Coeff}, 
one finds (nonzero) $\delta_i$ for $i\leq 2q^2$: 
$$
\delta_1=-1, \quad \delta_q=1, \quad \delta_{q+1}=-(T^q-T), \quad \delta_{q^2-q+1} = -1, \quad \dots 
$$
Next, by \cite[(6.2)]{Gekeler-Coeff}, 
\begin{align*}
t_\fp(z)\colonequals t(\fp z) &=\frac{t^{\abs{\fp}}}{\theta_\fp(t)}=\frac{t^{q^2}}{\fp t^{q^2-1}+(T^q+T+a)t^{q^2-q}+1} \\ 
& = t^{q^2}-(T^q+T+a) t^{2q^2-q}+\cdots 
\end{align*}
Substituting $t_\fp$ into $\Delta(t)$, one obtains the $t$-expansion of 
$$
\pi_C^{1-q^2}\Delta_\fp(z)=\pi_C^{1-q^2} \Delta(t_\fp) = -t^{(q-1)q^2}+\cdots 
$$
Thus, we can compute (using a computer) the $t$-expansion of 
$
\Delta/\Delta_\fp = t^{-(q^2-1)(q-1)}+\cdots  
$
and also find a $(q^2-1)$th root $\eta(s) := \sqrt[q^2-1]{\Delta/\Delta_\fp} := s^{-1}+\cdots \in \frac{1}{s}\cdot\mathbb{F}_q(T)[[s]]$ with leading coefficient $1$, where $s=t^{q-1}$ (cf. \cite[p. 683]{Gekeler-Coeff}). Next, 
\cite[Cor. 10.11]{Gekeler-Coeff} gives the $s$-expansion of $g(z)$ defined in \eqref{eqCoeffForms}:
$$
\pi_C^{1-q} g(s)=1-(T^q-T)\left(s+s^{q^2-q+1}(1-s^{q-1}+(T^q-T)s^q)^{(1-q)}\right)+\cdots 
$$
and thus also of $j(s)=g^{q+1}/\Delta=-s^{-1}+\cdots$
Substituting the $s$-expansions of $\eta$ and $j$ into $j\eta^{q^2}=f(\eta)$ and treating the coefficients of $f(x)$ as variables, one obtains 
a system of linear equations in those variables that can be uniquely solved (the number of coefficients in the $s$-expansions of $\eta$ and $j$ 
that need to be computed to solve this system depends on $q$). 

\begin{example} The computations in this and the other examples in this section were performed using the computer program \textsc{Magma}. 

	Let $q=3$ and $\fp=T^2+T+2$. We have 
	\begin{align*}
	\eta(s) &=s^{-1}+s+(2T+1)s^2+T(T+1)s^3-(T-1)^3s^4+\cdots  \\
	j(s) & = -s^{-1}+(T^3-T)-s+(T^9+T^3+T)s^2+\cdots  
	\end{align*}
	In this case, we find that 
	$$
	f(x) = -(x^2+(2T+1)\fp x-\fp^2)^4(x^2+(2T+1)x+\fp). 
	$$
	Thus, $X_0^1(\fp)$ can be defined by the equation 
	$$
	y^2=x(x^2+(2T+1)x+\fp). 
	$$
	For the other two irreducibles of degree $2$ in $\F_3[x]$ we get: 
	\begin{align*}
		&\fp=T^2+2T+2 &\leadsto & &f(x)=-(x^2+(2T+2)\fp x-\fp^2)^4\cdot (x^2+(2T+2)x+\fp)\\ 
		&\fp=T^2+1 &\leadsto & &f(x)=-(x^2+(2T)\fp x-\fp^2)^4\cdot (x^2+(2T)x+\fp). 
	\end{align*}
	Thus, writing $\fp=T^2+aT+b$ we conclude that for $q=3$ the equations of modular curves are
	\begin{align*}
	&X_0(\fp)\colon \quad j \cdot x^{q^2} = -(x^2+(2T+a)\fp x-\fp^2)^{(q+1)}(x^2+(2T+a)x+\fp) \\ 
	&X_0^1(\fp)\colon \quad y^2=x(x^2+(2T+a)x+\fp). 
	\end{align*}
\end{example} 

\begin{rem}
	The computations leading to an equation for $X_0(\fp)$ are valid also in even characteristic. For example, for $q=2$ and $\fp=T^2+T+1$ 
	one obtains
	$$
	X_0(\fp)\colon \quad j \cdot x^{q^2} = (x-\fp)^{(q+1)}(x^2+x+\fp). 
	$$
\end{rem}

\begin{example}
	Now let $q=5$ and $\fp=T^2+T+2$. One computes that 
	\begin{align*}
		&\eta(s) = s^{-1}+s^3+(2T + 1)s^4+\fp s^5+\cdots \\ 
		& j(s) = -s^{-1} + (T^5 - T) - s^3 + (T^{25} + T^5 + 3T)s^4 + (4T^{30} + T^{26} + T^6 + 4T^2)s^5+\cdots.
	\end{align*}
	With the help of these expansions, one computes the 
	defining equation of $X_0(\fp)$:
	$$j = -\frac{(\eta^4 - (T + 3)(T^2 + T + 1)\fp \eta^3 + 2(T + 2)(T+4)\fp^2\eta^2 + (2T + 1)\fp^3 \eta - \fp^4)^6 (\eta^2 + (2T + 1)\eta + \fp)}{\eta^{q^2}}.$$
	Similar computations for $\fp = T^2+T+1$ lead to 
	$$j = -\frac{(\eta^4 - (T + 3)(T^2 + T + 2)\fp\eta^3 + 2T(T+1)\fp^2\eta^2 + (2T+1)\fp^3\eta - \fp^4)^6(\eta^2 + (2T + 1)\eta + \fp)}{\eta^{q^2}}.$$
	Thus, we conclude that in both cases 
	$
	X_0^1(\fp)$ is defined by $y^2=x(x^2+(2T+a)x+\fp)$. 
\end{example}

Based on the above examples, we propose that for odd $q$ and $\fp=T^2+aT+b$, the elliptic curve $X_0^1(\fp)$ is defined by 
\begin{equation}\label{eq-eqX_0^1(P)}
\boxed{y^2=x(x^2+(2T+a)x+\fp)}
\end{equation}

\begin{rem}
It is easy to see certain patterns in $-f(x)/(x^2+(2T+a)x+\fp)$, namely this is a polynomial of the form
$$
\left(x^{q-1}+b_{q-2}\fp x^{q-2}+b_{q-3}\fp^2x^{q-3}+\cdots+ b_0 \fp^{q-1}\right)^{q+1},
$$	
where $b_i\in \F_q[T]$, $\deg_T(b_i)=i$ for $0\leq i\leq q-2$, $b_0=-1$, $b_1=2T+a$. But what is the general formula for these polynomials  is  not clear to us. 
\end{rem}

The elliptic curve \eqref{eq-eqX_0^1(P)} is visibly defined over $F$. Moreover, as one verifies using Tate's algorithm, its 
conductor is $\fp\cdot \infty^2$ and its fibres at $\fp$ and $\infty$ are of types $I_2$ and $I_2^*$, respectively, in Kodaira's classification; 
cf. \cite[p. 354]{SilvermanII}.

\begin{lem}\label{lemExtEC}
	Let $E$ be an elliptic curve over $F$ with multiplicative reduction at $\fp$, bad reduction at $\infty$ of type $I_2^*$, and good reduction 
	at all other places of $F$. Such an $E$ is isomorphic over $\overline{\F}_q(T)$ to the elliptic curve defined by \eqref{eq-eqX_0^1(P)}. 
\end{lem}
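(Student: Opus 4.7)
The plan is to deduce the isomorphism class of $E$ over $\oF_q(T)$ in two stages: first identify $j(E)$, then handle the quadratic twist. A direct computation from the model \eqref{eq-eqX_0^1(P)} gives
$$
j(X_0^1(\fp)) \,=\, \frac{256\,(T^2+aT+a^2-3b)^3}{(a^2-4b)\,\fp^2},
$$
where we write $\fp=T^2+aT+b$; in particular $\ord_\fp j = -2$ and $\ord_\infty j = -2$, consistent with the prescribed Kodaira types $I_2$ and $I_2^*$.

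For an arbitrary $E$ satisfying the hypotheses, the Kodaira--N\'eron classification forces $j(E)\in F$ to take the form
$$
j(E) \,=\, \frac{c\,P(T)}{\fp^{\,n}},
$$
where $I_n$ is the multiplicative fibre at $\fp$, $c\in\F_q^\times$, and $P\in\F_q[T]$ is monic of degree $2n+2$ and coprime to $\fp$. The conductor is $\fn_E = \fp\cdot\infty^2$ of degree $4$, so $E$ is extremal. I would then pin down $n$ and $P$ by combining two inputs. First, Szpiro's inequality over $\p^1$ gives $\deg \Delta_{\min}(E) \leq 6(\deg \fn_E - 2) = 12$, and since $\deg \Delta_{\min}(E) = 2n+8$ this forces $n\leq 2$. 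Second, to exclude $n=1$ and recover the numerator, I would quadratically twist $E$ by $\sqrt{T}$, which converts the $I_2^*$ fibre at $\infty$ into a (possibly split) $I_2$ fibre at the cost of an $I_0^*$ fibre at $T$; then the Drinfeld--Deligne modularity theorem applies to the twisted curve and realizes it as an isogeny factor of the Jacobian $J_0(T^2\fp)$. An enumeration of the elliptic factors of this Jacobian, compared to the twist of $X_0^1(\fp)$, leaves only one possibility, forcing $j(E) = j(X_0^1(\fp))$.

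Once the $j$-invariants agree, $E$ and $X_0^1(\fp)$ differ by a quadratic twist by some $d \in F^\times/(F^\times)^2$. Because the two curves share the same Kodaira type at every place of $F$, the associated quadratic character $\chi_d$ must be unramified everywhere. But over $\oF_q(T)$ the curve $\p^1_{\oF_q}$ admits no nontrivial everywhere-unramified covers (equivalently, $d$ is a product of a constant with an even product of primes, so a square in $\oF_q(T)^\times$), hence $E\cong X_0^1(\fp)$ over $\oF_q(T)$. The chief obstacle is the middle step pinning down $n=2$ and the numerator $P$: the Kodaira types alone constrain only the polar part of $j(E)$, and extracting the full $j$-invariant requires either the twist-and-modularity enumeration sketched above, or a direct analysis of minimal Weierstrass models compatible with the prescribed $c_4$, $c_6$ valuations at $\fp$ and $\infty$.
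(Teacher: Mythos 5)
Your outer framing (same $j$-invariant, then kill the quadratic twist using that an everywhere-unramified quadratic character over $\oF_q(T)$ is trivial) is sound, and your computation of $j$ for \eqref{eq-eqX_0^1(P)} is correct. But the heart of the lemma is exactly the step you flag as "the chief obstacle," and as it stands it is a genuine gap. Knowing the Kodaira types only gives the polar divisor of $j(E)$: you get $j(E)=cP(T)/\fp^{\,n}$ with $\deg P=2n+2$, and nothing so far determines $P$. Your proposed repair --- twist by $\sqrt{T}$, invoke Drinfeld--Deligne modularity for the twist, and ``enumerate the elliptic factors of $J_0(T^2\fp)$'' --- is not an argument: the twisted curve need not have \emph{split} multiplicative reduction at $\infty$ (so modularity in the form quoted need not apply without a further twist), the dimension of $J_0(T^2\fp)$ grows with $q$ so the enumeration is not a finite computation you can defer, and such an enumeration would in effect presuppose the kind of identification of modular curves the paper is in the middle of establishing. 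Incidentally, $n=2$ can be pinned down much more cheaply than by Szpiro plus this machinery: for a non-isotrivial elliptic surface over $\p^1$ the degree of the minimal discriminant is divisible by $12$, and $\deg\Delta_{\min}=2n+8\le 12$ forces $2n+8=12$. But even with $n=2$ in hand you still must show there is only \emph{one} curve (geometrically) with numerator of degree $6$ and the prescribed fibre configuration, and your proposal supplies no mechanism for that.

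The paper closes this gap by citing a classification rather than reconstructing $j$: since $\deg\fn_E=4$, the curve is extremal over $\oF_q(T)$ (by the criterion in Ito), and extremal elliptic surfaces are classified by W.~Lang for $p\ge 5$ and by Schweizer for $p=2,3$. Geometrically $\fp$ splits into two places, so the configuration is $(I_n,I_n,I_2^*)$ with the fibre over $\infty$ still additive (the conductor cannot drop below degree $4$); the classification then shows only $(I_2,I_2,I_2^*)$ occurs and that there is a unique such curve up to isomorphism, which must be \eqref{eq-eqX_0^1(P)} since that model has this type. If you want to avoid quoting Lang--Schweizer, you would instead have to carry out a direct analysis of the possible $(c_4,c_6)$ with the prescribed valuations (the alternative you mention in one sentence), which is a real computation, not a remark; as written, your proof does not establish the uniqueness statement that the lemma asserts.
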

\begin{proof}
	By assumption, the conductor of $E$ is $\fp\cdot \infty^2$, so it has degree $4$. Thus, $E$ is extremal over $\overline{\F}_q(T)$; see \cite[Prop. 4.2]{Ito}. 
	Such curves have been classified in characteristic $p\geq 5$ by W. Lang \cite{LangW} and $p=2, 3$ by Schweizer \cite{SchweizerExtremal}. 
	A convenient summary of Lang's result can be found in \cite[p. 720]{Ito}. 
	
	Now note that over  $\overline{\F}_q(T)$ the curve $E$ will have bad reduction at three places, two of which lie over $\fp$ and one lies over $\infty$. 
	The reduction at the places over $\fp$ will be multiplicative and the reduction at the place over $\infty$ must still 
	be additive as the conductor cannot have degree $< 4$. Thus, the type of $E$ over $\overline{\F}_q(T)$ is $(I_n, I_n, I_2^*)$. 
	Comparing with types of extremal elliptic curves in Lang's classification (see \cite[p. 720]{Ito} and also \cite[Prop. 4.2]{SchweizerExtremal}), 
	we see that only the type $(I_2, I_2, I_2^*)$ is possible, and there is a unique curve of this type, up to an isomorphism. Because the 
	curve defined by \eqref{eq-eqX_0^1(P)} has this type, the lemma follows. 
\end{proof}

By adapting an argument of Deligne and Rapoport for classical modular curves 
to Drinfeld modular curves, Gekeler showed in \cite{GekelerUber} that $X_0(\fp)$ has a model over $\Spec(A)$ which has semistable fibre 
at $\fp$ and smooth fibres at all other primes of $A$; here $\fp$ is an arbitrary prime of $A$.  
This heavily relies on the moduli interpretation of $X_0(\fp)$ and the work of Drinfeld \cite{Drinfeld}.  
Using similar methods, it should be possible to prove the same result also for $X_0^1(\fp)$, although the arguments will be quite technical. 
Assuming this, namely that $X_0^1(\fp)$ has a model over $\Spec(A)$ with semistable fibre 
at $\fp$ and smooth fibres at all other primes of $A$, we have the following: 

\begin{prop} Under the above assumption, $X_0^1(\fp)$ is isomorphic over $\overline{\F}_q(T)$ to the elliptic curve defined by \eqref{eq-eqX_0^1(P)} 
	when $\fp$ has degree $2$. 
\end{prop}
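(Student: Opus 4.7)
The plan is to verify that $X_0^1(\fp)$ satisfies the hypotheses of Lemma \ref{lemExtEC}, from which the proposition follows immediately. Concretely, I need to establish (i) multiplicative reduction at $\fp$, (ii) bad reduction of Kodaira type $I_2^*$ at $\infty$, and (iii) good reduction at every other place of $F$. That $X_0^1(\fp)$ is an elliptic curve over $F$ already follows from Corollary \ref{corg=1} (genus $1$) together with Corollary \ref{corNumCusps-GenN} applied with $s=1$, $r_1=1$, $\kappa(\fp)=2$ (two cusps defined over $F$, providing a rational base point). Item (iii) is precisely what the standing assumption provides.

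For (i), the standing assumption gives semistable reduction at $\fp$, so the reduction is either good or multiplicative. I would rule out good reduction via the moduli interpretation of $\S\ref{ssDM}$: in characteristic $\fp$ the $\fp$-torsion of any rank-$2$ Drinfeld module fails to be \'etale, so cyclic $\fp$-submodules degenerate generically in the special fibre at $\fp$, forcing the two cusps of $X_0^1(\fp)$ to specialize to a common point. By the semistability assumption, the resulting singularity is a node, so the reduction is multiplicative.

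For (ii), I would appeal to rigid analytic (Mumford) uniformization of $X_0^1(\fp)$ over $\Fi$: the dual graph of the special fibre at $\infty$ equals the finite part of $\G_0^1(\fp)\bs \cT$, whose first Betti number equals $g(X_0^1(\fp))=1$ by the genus formula of Section \ref{sGF}. This graph contains a single cycle, whose length $n$ is read off directly from the combinatorial data; an inspection of the graph for $\fp$ irreducible of degree $2$ (as in the appendix) yields $n=2$. By Mumford's theorem this gives geometric reduction of type $I_2$ at $\infty$. To upgrade this to $I_2^*$ over $F$ itself, I would analyze how $\Gal(\Fi^{\sep}/\Fi)$ acts on the uniformization: by the Remark after \eqref{eqGenRH}, the covering $X_0^1(\fp)\to X_0(\fp)$ ramifies at both (regular) cusps of $X_0(\fp)$, in particular at the cusp over $\infty$. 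This ramification introduces the quadratic twist responsible for the ``$*$'', so $X_0^1(\fp)$ has Kodaira type $I_2^*$ over $\Fi$ but becomes $I_2$ over a suitable quadratic extension of $\Fi$.

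The most delicate step is the identification of the ``$*$'' twist in (ii): the combinatorial quotient $\G_0^1(\fp)\bs \cT$ only detects the geometric Kodaira type (namely $I_2$ after base change to $\overline{\F}_q(T)$), so a separate Galois-equivariant analysis of the uniformization, or equivalently of the cover $X_0^1(\fp)\to X_0(\fp)$ at $\infty$, is required to extract the arithmetic twist. Once (i)--(iii) are all established, Lemma \ref{lemExtEC} yields $X_0^1(\fp)\cong \eqref{eq-eqX_0^1(P)}$ over $\overline{\F}_q(T)$.
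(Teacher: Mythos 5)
Your overall strategy (reduce to Lemma \ref{lemExtEC}, then read off the special fibre at $\infty$ from the finite part of $\G_0^1(\fp)\bs \cT$, which is a cycle of length $2$ with edge stabilizers $\pm 1$ as in Figure \ref{Fig3}) is the same as the paper's, but the one step that actually carries the arithmetic content is not established in your write-up. The Mumford-curve argument only gives that the reduction at $\infty$ is \emph{potentially} multiplicative of type $I_2$; to conclude type $I_2^*$ for the $F$-model in question you must show the reduction at $\infty$ is additive. You attribute the ``$*$'' to the fact that the degree-$2$ cover $X_0^1(\fp)\to X_0(\fp)$ ramifies at the cusp over $\infty$, but this is an unproven assertion, and it is not the right mechanism: ramification of a covering of modular curves at a cusp is not Galois descent data on the elliptic curve $X_0^1(\fp)$ itself and does not by itself produce a quadratic twist at the place $\infty$. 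The paper gets additivity for free from the conductor bound: a non-isotrivial elliptic curve over $F$ has conductor of degree $\geq 4$, the finite places contribute at most $\deg(\fp)=2$ under the standing assumption, so $\infty$ must contribute $\geq 2$, i.e.\ the reduction at $\infty$ is additive; additive reduction that becomes $I_2$ after a quadratic extension is $I_2^*$. Without this (or some genuine substitute for your ``Galois-equivariant analysis''), your argument only proves geometric type $I_2$ at $\infty$ and the proposition does not follow.

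Your treatment of multiplicative reduction at $\fp$ is also off. The claim that the two cusps of $X_0^1(\fp)$ ``specialize to a common point'' in the fibre at $\fp$ is false in the Deligne--Rapoport/Drinfeld picture (the special fibre consists of two components crossing at supersingular points, with the cusps lying on different components), and ``$\fp$-torsion is not \'etale in characteristic $\fp$'' does not yield the stated degeneration. This step is in any case unnecessary: once $I_2^*$ at $\infty$ is known, good reduction at $\fp$ would give a conductor of degree $2<4$, contradicting non-isotriviality (which itself follows from the rigid uniformization at $\infty$), so the semistable fibre at $\fp$ must be multiplicative. This is exactly why the paper reduces everything to the single claim that the fibre at $\infty$ is of type $I_2^*$.
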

\begin{proof}
	Thanks to Lemma \ref{lemExtEC}, it is enough to show that the reduction type of the given model of $X_0^1(\fp)$ at $\infty$ is $I_2^*$.  
	Because the conductor of $X_0^1(\fp)$ must have degree at least $4$, the reduction at $\infty$ is additive. On the other hand, 
	$X_0^1(\fp)$  has rigid analytic uniformization over $\Fi$. Thus, it has a model over $\cO_\infty$ whose closed fibre 
	is the dual graph of $\G_0^1(\fp)\bs \cT$ without cusps. Since this graph consists of two edges 
	 joined at their terminal vertices and the stabilizers of the edges are $\pm 1$ (see Figure \ref{Fig3}), one concludes that $X_0^1(\fp)$ over $\Fi$ has 
	 multiplicative reduction and its component group has order $2$. (This is mostly a consequence of well-known facts about Mumford curves; 
	 cf. \cite[$\S$4.2]{PapikianAIF}.) Therefore, the fibre at $\infty$ of the given model of $X_0^1(\fp)$  over $F$ is of type $I_2^*$. 
\end{proof}


\section{Appendix} In this appendix we give pictures of some quotient graphs $\G_0(\fn)\bs \cT$ and $\G_0^1(\fn)\bs \cT$, and the 
natural covering map $\G_0^1(\fn)\bs \cT\to \G_0(\fn)\bs \cT$ between them. It is assumed that $q$ is odd. As in the main text, $\fp$ denotes a prime. 
The dotted arrows in the figures denote infinite half-lines (cusps); the cusps are labeled using representatives of orbits of $G_{d-1}$ acting on $\p^1(A/\fn)$.  
More specifically, $[\infty]\colonequals (1:0)$, $[a]\colonequals(a:1)$, $\begin{bmatrix} a\\ b\end{bmatrix}\colonequals(a:b)$ if $\gcd(b, \fn)\neq 1$. 


\begin{figure}[h]
	\scalebox{0.7}
	{
	\begin{tikzpicture}[thick, node distance=1.5cm, inner sep=.6mm, vertex/.style={circle, fill=black}]
			\node (a) at (0,0) 
		{
			\begin{tikzpicture}[thick, node distance=1.5cm, inner sep=.6mm, vertex/.style={circle, fill=black}]
				
				\node[vertex] (v1) {};
				\node[vertex] (v0) [right of=v1, node distance=2cm] {};
				\draw (v1) edge[bend right=70]  (v0)  (v1) edge[bend left=70] (v0); 
				
				\node[vertex] (0) [below right of=v0, node distance=0cm] {};
				\node[vertex] (1) [above right of=0] {};
				\node[vertex] (2) [right of=1] {};
				\node[] (3) [right of=2, label=right:${[\infty]}$] {};
				\node[vertex] (-1) [below right of=0] {};
				\node[vertex] (-2) [right of=-1] {};
				\node[] (-3) [right of=-2, label=right:${[0]}$] {};
				
				\node[vertex] (0a) [below left of=v1, node distance=0cm] {};
				\node[vertex] (1a) [above left of=0a] {};
				\node[vertex] (2a) [left of=1a] {};
				\node[] (3a) [left of=2a, label=left:${[\fp]}$] {};
				\node[vertex] (-1a) [below left of=0a] {};
				\node[vertex] (-2a) [left of=-1a] {};
				\node[] (-3a) [left of=-2a, label=left:${[\fp']}$] {};
				
				\path[]
				(-2) edge (-1) (-1) edge (0) (0) edge (1) (1) edge (2);
				
				\path[]
				(-2a) edge (-1a) (-1a) edge (0a) (1a) edge (0a) (2a) edge (1a);
				
				\draw [thick, ->, dashed, line width=0.5mm] (-2) -- (-3);
				\draw [thick, ->, dashed, line width=0.5mm] (2) -- (3);
				\draw [thick, ->, dashed, line width=0.5mm] (-2a) -- (-3a);
				\draw [thick, ->, dashed, line width=0.5mm] (2a) -- (3a);
				
			\end{tikzpicture}
		};
		
		\node (b) at (a.south) [anchor=north,yshift=-1cm]
		{
\begin{tikzpicture}[thick, node distance=1.5cm, inner sep=.6mm, vertex/.style={circle, fill=black}]
	\node[vertex] (v1) {};
	\node[vertex] (v0) [right of=v1, node distance=2cm] {};
	\draw (v1) edge (v0); 
	
	\node[vertex] (0) [below right of=v0, node distance=0cm] {};
	\node[vertex] (1) [above right of=0] {};
	\node[vertex] (2) [right of=1] {};
	\node[] (3) [right of=2, label=right:${[\infty]}$] {};
	\node[vertex] (-1) [below right of=0] {};
	\node[vertex] (-2) [right of=-1] {};
	\node[] (-3) [right of=-2, label=right:${[0]}$] {};
	
	\node[vertex] (0a) [below left of=v1, node distance=0cm] {};
	\node[vertex] (1a) [above left of=0a] {};
	\node[vertex] (2a) [left of=1a] {};
	\node[] (3a) [left of=2a, label=left:${[\fp]}$] {};
	\node[vertex] (-1a) [below left of=0a] {};
	\node[vertex] (-2a) [left of=-1a] {};
	\node[] (-3a) [left of=-2a, label=left:${[\fp']}$] {};
	
	\path[]
	(-2) edge (-1) (-1) edge (0) (0) edge (1) (1) edge (2);
	
	\path[]
	(-2a) edge (-1a) (-1a) edge (0a) (1a) edge (0a) (2a) edge (1a);
	
	\draw [thick, ->, dashed, line width=0.5mm] (-2) -- (-3);
	\draw [thick, ->, dashed, line width=0.5mm] (2) -- (3);
	\draw [thick, ->, dashed, line width=0.5mm] (-2a) -- (-3a);
	\draw [thick, ->, dashed, line width=0.5mm] (2a) -- (3a);

\end{tikzpicture}
	};
\draw [->] (a)--(b);
\end{tikzpicture}
}
\caption{$\Gamma_0^1(\fp\fp')\backslash \cT\To \Gamma_0(\fp\fp')\backslash\cT$ for $q$ odd, $\deg(\fp)=\deg(\fp')=1$, $\fp\neq \fp'$}\label{Fig2}
\end{figure}


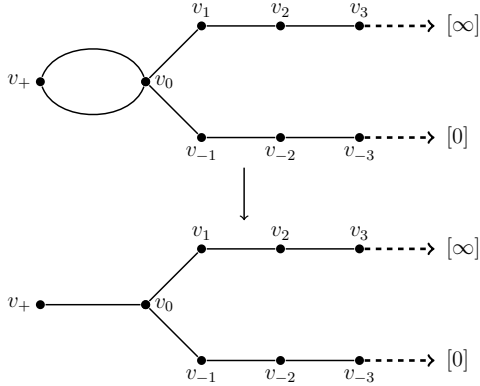
\begin{figure}[h]
	\scalebox{0.7}
	{
	\begin{tikzpicture}[thick, node distance=1.5cm, inner sep=.6mm, vertex/.style={circle, fill=black}]
		\node (a) at (0,0) 
		{
\begin{tikzpicture}[thick, node distance=1.5cm, inner sep=.6mm, vertex/.style={circle, fill=black}]
	
	\node[vertex] (1) [label=left:$v_{+}$] {};
	\node[vertex] (v0) [right of=1, node distance=2cm, label=right:$v_{0}$] {};
	\draw (1) edge[bend right=70]  (v0)  (1) edge[bend left=70] (v0); 
	
	\node[vertex] (0) [below right of=v0, node distance=0cm] {};
	\node[vertex] (1a) [above right of=0, label=above:$v_1$] {};
	\node[vertex] (2) [right of=1a, label=above:$v_2$] {};
	\node[vertex] (3) [right of=2, label=above:$v_3$] {};
	\node[] (4) [right of=3, label=right:${[\infty]}$] {};
	\node[vertex] (-1) [below right of=0, label=below:$v_{-1}$] {};
	\node[vertex] (-2) [right of=-1, label=below:$v_{-2}$] {};
	\node[vertex] (-3) [right of=-2, label=below:$v_{-3}$] {};
	\node[] (-4) [right of=-3, label=right:${[0]}$] {};
	
	\path[]
	(-3) edge (-2) (-2) edge (-1) (-1) edge (0) (0) edge (1a) (1a) edge (2) (2) edge (3);
	
	\draw [thick, ->, dashed, line width=0.5mm] (3) -- (4);
	\draw [thick, ->, dashed, line width=0.5mm] (-3) -- (-4);
	
\end{tikzpicture}
};

\node (b) at (a.south) [anchor=north,yshift=-1cm]
{
	\begin{tikzpicture}[thick, node distance=1.5cm, inner sep=.6mm, vertex/.style={circle, fill=black}]
		
		\node[vertex] (1) [label=left:$v_{+}$] {};
		\node[vertex] (v0) [right of=1, node distance=2cm, label=right:$v_{0}$] {};
		\draw (1) edge  (v0);
		
		\node[vertex] (0) [below right of=v0, node distance=0cm] {};
		\node[vertex] (1a) [above right of=0, label=above:$v_1$] {};
		\node[vertex] (2) [right of=1a, label=above:$v_2$] {};
		\node[vertex] (3) [right of=2, label=above:$v_3$] {};
		\node[] (4) [right of=3, label=right:${[\infty]}$] {};
		\node[vertex] (-1) [below right of=0, label=below:$v_{-1}$] {};
		\node[vertex] (-2) [right of=-1, label=below:$v_{-2}$] {};
		\node[vertex] (-3) [right of=-2, label=below:$v_{-3}$] {};
		\node[] (-4) [right of=-3, label=right:${[0]}$] {};
		
		\path[]
		(-3) edge (-2) (-2) edge (-1) (-1) edge (0) (0) edge (1a) (1a) edge (2) (2) edge (3);
		
		\draw [thick, ->, dashed, line width=0.5mm] (3) -- (4);
		\draw [thick, ->, dashed, line width=0.5mm] (-3) -- (-4);
		
	\end{tikzpicture}
	};
\draw [->] (a)--(b);
\end{tikzpicture}
}
\caption{$\Gamma_0^1(\mathfrak{p})\backslash \mathscr{T}\To\Gamma_0(\mathfrak{p})\backslash \mathscr{T}$ for $q$ odd and $\deg(\mathfrak{p}) = 2$}\label{Fig3}
\end{figure}


\begin{figure}[h]
	\scalebox{0.7}
	{
		\begin{tikzpicture}[thick, node distance=1.5cm, inner sep=.6mm, vertex/.style={circle, fill=black}]
			\node (a) at (0,0) 
			{
\begin{tikzpicture}[thick, node distance=1.5cm, inner sep=.6mm, vertex/.style={circle, fill=black}]
	
	\node[vertex] (0) {};
	\node[vertex] (1) [above right of=0] {};
	\node[vertex] (2) [right of=1] {};
	\node[] (3) [right of=2, label=right:${[\infty]}$] {};
	\node[vertex] (-1) [below right of=0] {};
	\node[vertex] (-2) [right of=-1] {};
	\node[] (-3) [right of=-2, label=right:${[0]}$] {};
	
	\node[vertex] (1a) [above left of=0] {};
	\node[vertex] (2a) [left of=1a] {};
	\node[] (3a) [left of=2a, label=left:${\begin{bmatrix}
			1\\ \mathfrak{p}
	\end{bmatrix}}$] {};
	\node[vertex] (-1a) [below left of=0] {};
	\node[vertex] (-2a) [left of=-1a] {};
	\node[] (-3a) [left of=-2a, label=left:${\begin{bmatrix}
			s\\ \mathfrak{p}
	\end{bmatrix}}$] {};
	
	\path[]
	(-2) edge (-1) (-1) edge (0) (0) edge (1) (1) edge (2) (-2a) edge (-1a) (-1a) edge (0) (1a) edge (0) (2a) edge (1a);
	
	\draw [thick, ->, dashed, line width=0.5mm] (2) -- (3);
	\draw [thick, ->, dashed, line width=0.5mm] (-2) -- (-3);
	\draw [thick, ->, dashed, line width=0.5mm] (2a) -- (3a);
	\draw [thick, ->, dashed, line width=0.5mm] (-2a) -- (-3a);
	
\end{tikzpicture}
	};
	
	\node (b) at (a.south) [anchor=north,yshift=-1cm]
	{
		\begin{tikzpicture}[thick, node distance=1.5cm, inner sep=.6mm, vertex/.style={circle, fill=black}]
			
			\node[vertex] (0) {};
			\node[vertex] (1) [above right of=0] {};
			\node[vertex] (2) [right of=1] {};
			\node[] (3) [right of=2, label=right:${[\infty]}$] {};
			\node[vertex] (-1) [below right of=0] {};
			\node[vertex] (-2) [right of=-1] {};
			\node[] (-3) [right of=-2, label=right:${[0]}$] {};
			
			\node[vertex] (1a) [left of=0] {};
            \node[vertex] (2a) [left of=1a] {};
			\node[] (3a) [left of=2a, label=left:${\begin{bmatrix}
					1\\ \mathfrak{p}
			\end{bmatrix}}$] {};
			
			\path[]
			(-2) edge (-1) (-1) edge (0) (0) edge (1) (1) edge (2) (1a) edge (0) (2a) edge (1a);
			
			\draw [thick, ->, dashed, line width=0.5mm] (2) -- (3);
			\draw [thick, ->, dashed, line width=0.5mm] (-2) -- (-3);
			\draw [thick, ->, dashed, line width=0.5mm] (2a) -- (3a);
			
		\end{tikzpicture}
	};
\draw [->] (a)--(b);
\end{tikzpicture}
}
\caption{$\Gamma_0^1(\fp^2)\backslash \mathscr{T}\To \Gamma_0(\fp^2)\backslash \mathscr{T}$ for $q$ odd, $\deg(\fp)=1$, $s\in \F_q^\times -  (\F_q^\times)^2$}\label{Fig4}
\end{figure}

\newpage


\begin{figure}[h]
	\scalebox{0.5}
	{
		\begin{tikzpicture}[thick, node distance=1.5cm, inner sep=.6mm, vertex/.style={circle, fill=black}]
			\node (a) at (0,0) 
			{
\begin{tikzpicture}[thick, node distance=1.5cm, inner sep=.6mm, vertex/.style={circle, fill=black}]
	
	\node[vertex] (0) {};
	\node[vertex] (1) [left of=0, node distance=5cm] {};
	\node[vertex] (2) [right of=0, node distance=5cm] {};
	\node[] (1a) [left of=1, node distance=1cm, label=left:${[0]}$] {};
	\node[vertex] (base) [left of=1a, node distance=2cm, label=left:$v_{+}$] {};
	\node[] (2a) [right of=2, node distance=1cm, label=right:${[\infty]}$] {};
	\node[] (3b) [below of=1, node distance=5cm] {};
	\node[] (4b) [below of=2, node distance=5cm] {};
	\node[vertex] (3) [right of=3b, node distance=2cm] {};
	\node[vertex] (4) [left of=4b, node distance=2cm] {};
	\node[vertex] (5) [below of=0, node distance=1cm] {};
	\node[] (5a) [below of=5, node distance=0.7cm, label=below:$\begin{bmatrix}1\\ \mathfrak{p}\end{bmatrix}$] {};
	\node[vertex] (6) [below of=5, node distance=2cm] {};
	\node[] (6a) [below of=6, node distance=0.5cm] {};
	\node[vertex] (7) [below of=6, node distance=1cm] {};
	\node[] (7a) [below of=7, node distance=0.5cm, label=left:$\begin{bmatrix}T+1\\ \mathfrak{p}\end{bmatrix}$] {};
	\node[vertex] (8) [below of=7, node distance=1cm] {};
	\node[] (8a) [below of=8, node distance=0.5cm, label=right:$\begin{bmatrix}T+2\\ \mathfrak{p}\end{bmatrix}$] {};
	\node[vertex] (9) [below of=8, node distance=1cm] {};
	\node[] (9a) [below of=9, node distance=0.5cm, label=left:$\begin{bmatrix}T\\ \mathfrak{p}\end{bmatrix}$] {};
	
	\node[vertex] (up3) [above of=3, node distance=10cm] {};
	\node[vertex] (up4) [above of=4, node distance=10cm] {};
	\node[vertex] (up5) [above of=0, node distance=1cm] {};
	\node[] (up5a) [above of=up5, node distance=0.7cm, label=above:$\begin{bmatrix}-1\\ \mathfrak{p}\end{bmatrix}$] {};
	\node[vertex] (up6) [above of=up5, node distance=2cm] {};
	\node[] (up6a) [above of=up6, node distance=0.5cm] {};
	\node[vertex] (up7) [above of=up6, node distance=1cm] {};
	\node[] (up7a) [above of=up7, node distance=0.5cm, label=right:$\begin{bmatrix}-(T+1)\\ \mathfrak{p}\end{bmatrix}$] {};
	\node[vertex] (up8) [above of=up7, node distance=1cm] {};
	\node[] (up8a) [above of=up8, node distance=0.5cm, label=left:$\begin{bmatrix}-(T+2)\\ \mathfrak{p}\end{bmatrix}$] {};
	\node[vertex] (up9) [above of=up8, node distance=1cm] {};
	\node[] (up9a) [above of=up9, node distance=0.5cm, label=right:$\begin{bmatrix}-T\\ \mathfrak{p}\end{bmatrix}$] {};
	
	\draw [thick, ->, dashed, line width=1mm] (1) -- (1a);
	\draw [thick, ->, dashed, line width=1mm] (2) -- (2a);
	\draw (1) --  (0) --  (2);
	
	\draw (8) -- (3) -- (1) (2) -- (4) -- (8);
	\draw (3) -- (7) -- (4) -- (9) -- (3) (0) -- (5);
	\draw (5) edge[bend right=70]  (6) (5) edge[bend left=70]  (6);
	\draw (6) edge  (7) (6) edge[bend left=70]  (up8);
	\draw [thick, ->, dashed, line width=1mm] (5) -- (5a);
	\draw [thick, ->, dashed, line width=1mm] (7) -- (7a);
	\draw [thick, ->, dashed, line width=1mm] (8) -- (8a);
	\draw [thick, ->, dashed, line width=1mm] (9) -- (9a);
	
	\draw (up8) -- (up3) -- (1) (2) -- (up4) -- (up8);
	\draw (up3) -- (up7) -- (up4) -- (up9) -- (up3) (0) -- (up5);
	\draw (up5) edge[bend right=70]  (up6) (up5) edge[bend left=70]  (up6);
	\draw (base) edge[bend right=50]  (9) (base) edge[bend left=50]  (up9);
	\draw (up6) edge  (up7) (up6) edge[bend left=70]  (8);
	\draw [thick, ->, dashed, line width=1mm] (up5) -- (up5a);
	\draw [thick, ->, dashed, line width=1mm] (up7) -- (up7a);
	\draw [thick, ->, dashed, line width=1mm] (up8) -- (up8a);
	\draw [thick, ->, dashed, line width=1mm] (up9) -- (up9a);
\end{tikzpicture}
	};

\node (b) at (a.south) [anchor=north,yshift=-1cm]
{
	\begin{tikzpicture}[thick, node distance=1.5cm, inner sep=.6mm, vertex/.style={circle, fill=black}]
		
		\node[vertex] (0) {};
		\node[vertex] (1) [left of=0, node distance=3cm] {};
		\node[vertex] (2) [right of=0, node distance=3cm] {};
		\node[] (1a) [left of=1, node distance=1cm, label=left:${[0]}$] {};
		\node[] (2a) [right of=2, node distance=1cm, label=right:${[\infty]}$] {};
		\node[vertex] (3) [below of=1, node distance=5cm] {};
		\node[vertex] (4) [below of=2, node distance=5cm] {};
		\node[vertex] (5) [below of=0, node distance=1cm] {};
		\node[] (5a) [below of=5, node distance=0.7cm, label=below:$\begin{bmatrix}1\\ \mathfrak{p}\end{bmatrix}$] {};
		\node[vertex] (6) [below of=5, node distance=2cm] {};
		\node[] (6a) [below of=6, node distance=0.5cm, label=right:$\begin{bmatrix}T+1\\ \mathfrak{p}\end{bmatrix}$] {};
		\node[vertex] (7) [below of=6, node distance=1cm] {};
		\node[] (7a) [below of=7, node distance=0.5cm, label=left:$\begin{bmatrix}T+2\\ \mathfrak{p}\end{bmatrix}$] {};
		\node[vertex] (8) [below of=7, node distance=1cm] {};
		\node[] (8a) [below of=8, node distance=0.5cm, label=right:$\begin{bmatrix}T\\ \mathfrak{p}\end{bmatrix}$] {};
		\node[vertex] (9) [below of=8, node distance=1cm] {};
		\node[vertex] (10) [below of=9, node distance=1cm, label=right:$v_{+}$] {};
		
		\draw (8) -- (3) -- (1) --  (0) --  (2) -- (4) -- (8);
		\draw (3) -- (7) -- (4) -- (9) -- (3) (9) -- (10) (0) -- (5);
		\draw (5) edge[bend right=70]  (6) (5) edge[bend left=70]  (6);
		\draw (6) edge[bend right=70]  (7) (6) edge[bend right=70]  (8);
		\draw [thick, ->, dashed, line width=1mm] (1) -- (1a);
		\draw [thick, ->, dashed, line width=1mm] (2) -- (2a);
		\draw [thick, ->, dashed, line width=1mm] (5) -- (5a);
		\draw [thick, ->, dashed, line width=1mm] (8) -- (7a);
		\draw [thick, ->, dashed, line width=1mm] (7) -- (6a);
		\draw [thick, ->, dashed, line width=1mm] (9) -- (8a);
	\end{tikzpicture}
	
		};
	\draw [->] (a)--(b);
\end{tikzpicture}
}
\caption{$\Gamma_0(\mathfrak{p}^2)\backslash \mathscr{T}$, $\mathfrak{p} = T^2+1$, $q = 3$}\label{Fig5}
\end{figure}
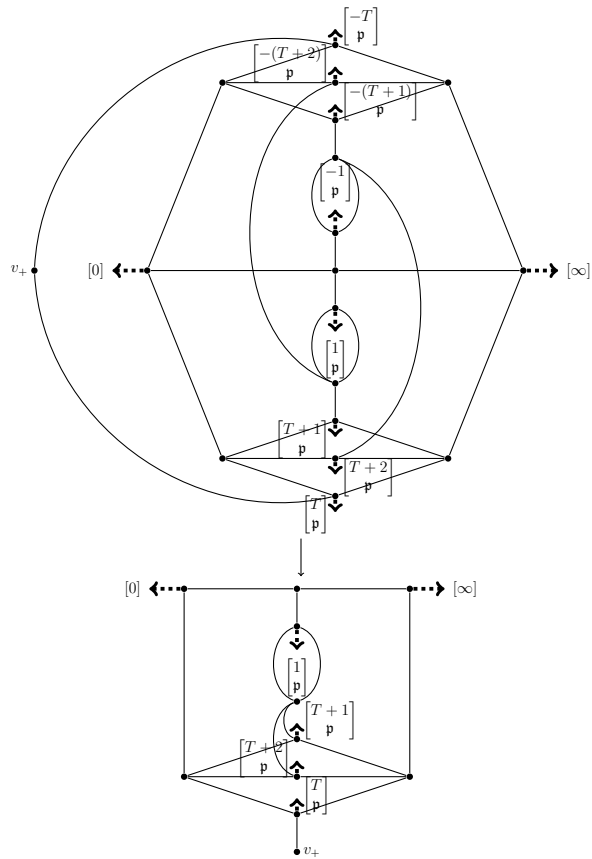


\newpage


\renewcommand{\bibliofont}{\normalsize}
\bibliographystyle{amsalpha}
\bibliography{bibliography.bib}

\end{document}